\providecommand{\U}[1]{\protect\rule{.1in}{.1in}}
\providecommand{\U}[1]{\protect\rule{.1in}{.1in}}
\theoremstyle{plain}
\numberwithin{equation}{section}
\newtheorem{theorem}{Theorem}[section]
\newtheorem{lemma}[theorem]{Lemma}
\newtheorem{proposition}[theorem]{Proposition}
\newtheorem{corollary}[theorem]{Corollary}
\theoremstyle{definition}
\theoremstyle{remark}
\newtheorem*{claim*}{Claim}
\newtheorem*{example*}{Example}
\newtheorem*{remark*}{Remark}
\newtheorem{remark}{Remark}
\begin{document}
\title[Parabolic Optimal Transport ]{Parabolic optimal transport equations on manifolds}
\author{Young-Heon Kim, Jeffrey Streets and Micah Warren}
\address{Department of Mathematics, University of British Columbia, Vancouver BC Canada\\
Department of Mathematics, Princeton University, Princeton NJ, USA}
\email{yhkim@math.ubc.ca, jstreets@math.princeton.edu,mww@princeton.edu}
\thanks{Y-H.K. is supported partly by NSERC discovery grant 371642-09. J.S. is
supported in part by NSF Grant 0703660.\ \ M.W. is supported in part by NSF
Grant DMS-0901644. \ Any opinions, findings and conclusions or recommendations
expressed in this material are those of authors and do not reflect the views
of either Natural Sciences and Engineering Research Council of Canada or the
US National Science Foundation. }
\thanks{\indent
\copyright 2010 by the authors. }
\date{\today}

\begin{abstract}
We study a parabolic equation for finding solutions to the optimal transport
problem on compact Riemannian manifolds with general cost functions. We show
that if the cost satisfies the strong MTW condition and the stay-away
singularity property, then the solution to the parabolic flow with any
appropriate initial condition exists for all time and it converges
exponentially to the solution to the optimal transportation problem. Such
results hold in particular, on the sphere for the distance squared cost of the
round metric and for the far-field reflector antenna cost, among others.

\end{abstract}
\maketitle

\section{Introduction}

\bigskip In this paper we study a fully nonlinear parabolic flow toward
optimal transport maps between smooth densities on compact manifolds. Let $M$,
$\bar{M}$ be two $n$-dimensional closed (i.e. compact and without boundary)
Riemannian manifolds {equipped with} the transportation cost { $c:M\times
\bar{M}\ni(x,\bar{x})\mapsto c(x,\bar{x})\in\mathbf{R}\cup\{+\infty\}$.} For
example, $M=\bar{M}$ and $c(x,\bar{x})=\mathrm{dist}(x,\bar{x})^{2}$ where
$\text{\textrm{dist}}$ denotes the Riemannian distance function of the given
metric. As we see the presence of cut-locus for $\text{\textrm{dist}}$, the
transportation cost may not be smooth, and we denote $\text{\textrm{sing}%
}(c)\subset M\times\bar{M}$ to be the set of points where $c$ fails to be
$C^{\infty}$. Let $\rho$, $\bar{\rho}$ be two smooth probability measures on
$M$, $\bar{M}$, respectively. (We use $\rho(x),\bar{\rho}(\bar{x})$ to express
the corresponding densities in local coordinates, i.e. $\rho=\rho(x)dx$,
$\bar{\rho}=\rho(\bar{x})d\bar{x}$.)

One seeks an optimal map $T:M\rightarrow\bar{M},$ which minimizes the cost
functional
\[
\int_{M}c(x,F(x))\rho dx
\]
among all maps $F$ which satisfy $F_{\#}\rho=\bar{\rho}$, i.e., $\int
_{M}f(F(x))\bar{\rho}(x)dx=\int_{\bar{M}}f(\bar{x})\bar{\rho}(\bar{x})d\bar
{x}$ for all $f\in C^{\infty}(\bar{M})$.

It is well known (c.f. \cite{B, GM, Mc, MTW}) that such an optimal map
uniquely exists under some appropriate conditions on $c$,$\rho,\bar{\rho}$,
and it always is associated with a potential function $u:M\rightarrow
\mathbf{R}$, such that the optimal map $T$ (if smooth) satisfies
\begin{equation}
-D_{x}c(x,T(x))=D_{x}u(x).\label{eq:potential}%
\end{equation}
Here, $D_{x}$ denotes the derivative in the first variable $x\in M$. Note that
the condition $T_{\#}\rho=\bar{\rho}$ forces the potential $u$ (if smooth) to
satisfy a Monge-Amp\`{e}re type equation%
\begin{equation}
\det\left(  u_{ij}(x)+c_{ij}(x,T(x)\right)  =\frac{\rho(x)}{\bar{\rho}%
(T(x))}\left\vert \det c_{i\bar{j}}(x,T(x))\right\vert \label{eq:OT}%
\end{equation}
where the subindices $i,j,\cdots$ denote the derivative (in local coordinates)
in the first variable $x$ while the barred subindices $\bar{\imath},\bar
{j},\cdots$ denote the one for the second variable $\bar{x}$.

The seminal work of Ma, Trudinger and Wang \cite{MTW, TW} (which follows that
of Delano\"{e} \cite{D}, Caffarelli \cite{C, C2} and Urbas \cite{U}) studies
regularity of optimal maps for general cost functions $c$ by considering
regularity of solutions to the fully nonlinear (degenerate) elliptic equation
\eqref{eq:OT}. Importantly, they{ have} identified a structure condition on
$c$, now widely called the $\mathsf{MTW}$ condition (see \ref{SS:MTW} for more
details), which later is shown by Loeper \cite{Lo} to be a necessary condition
for the regularity of solutions to \eqref{eq:OT}. This then is followed by
many works including those of (in alphabetical order) Delano\"{e}, Figalli,
Ge, Kim, Liu, Loeper, McCann, Rifford, Trudinger, Villani, Wang and Warren
\cite{Lo, Lo2, TW2, KM1, Km, KM2, DG, LV, Villani, FR, LTW, FKM, kimmccwar,
FKM-econ, FKM-product, FRV-reg}, among others, which study both regularity of
optimal maps and geometric issues related to the $\mathsf{MTW}$ condition.

With this in mind, in the present paper we study the following fully nonlinear
parabolic equation for $u:M\times\lbrack0,\infty)\rightarrow\mathbf{R}$,
\begin{equation}
\frac{\partial u}{\partial t}=\ln\det(u_{ij}+c_{ij}(x,T(x)))-\ln\rho
(x)+\ln\bar{\rho}(T(x))-\ln\det|c_{i\bar{j}}(x,T(x))|. \label{eq:main}%
\end{equation}
Here, to define $T$ uniquely by $u$ through \eqref{eq:potential} and to make
the logarithm make sense, we assume (as in \cite{MTW}) throughout the present
paper, the following conditions for the cost function:
\begin{align}
&  \text{the map $\bar{x}\mapsto D_{x}c(x,\bar{x})\in T_{x}^{\ast}M$ is
injective for $(x,\bar{x})\in M\times\bar{M}\setminus\text{\textrm{sing}}(c)$%
};\label{eq:a2}\\
&  \text{$\det c_{i\bar{j}}(x,\bar{x})\neq0$ on $M\times\bar{M}\setminus
\text{\textrm{sing}}(c)$. } \label{eq:a29}%
\end{align}
Notice that the right hand side in \eqref{eq:main} is a coordinate invariant quantity.

The main result of the present paper is the following

\begin{theorem}
[Parabolic flow toward optimal transport on manifolds]\label{main} Let $M$,
$\bar{M}$ be two $n$-dimensional compact Riemannian manifolds without
boundary, equipped with a cost function { $c:M\times\bar{M}\rightarrow
\mathbf{R}\cup\{+\infty\}$} satisfying \eqref{eq:a2} and \eqref{eq:a29}. {
Assume that $c$ is locally semi-concave on the domain where its value is
finite.} Let $\rho,\bar{\rho}$ be smooth, positive, probability densities on
$M$, $\bar{M}$, respectively. Assume further $c$ satisfies (i)
$\mathsf{MTW(\delta)}$ condition for some $\delta>0$, and (ii)
stay-away-from-singularity property (see Sections \ref{SS:MTW} and
\ref{SS:stay away} for definitions). {Let $u_{0}:M\rightarrow\mathbf{R}$ be a
$C^{2}$ locally strictly $c$-convex function: see \eqref{eq:locally c-convex}
for definition. Then,} there exists a unique smooth solution $u:M\times
\lbrack0,\infty)\rightarrow\mathbf{R}$, of \eqref{eq:main} with $u(\cdot
,0)\equiv u_{0}$. \ Moreover, the derivatives of $u$ (i.e. $\Vert
u\Vert_{C^{m}}$ for each $m\in%
\mathbb{N}
$) are bounded uniformly in time, and as $t\rightarrow\infty$, $u(\cdot,t)$
converges exponentially to a solution of the equation \eqref{eq:OT}, \ and
this solution to \eqref{eq:OT} defines the unique solution to the optimal
transportation problem. \ \ 
\end{theorem}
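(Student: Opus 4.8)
The strategy is the standard parabolic continuity-method package for fully nonlinear equations, with the extra input coming from the $\mathsf{MTW}(\delta)$ condition and the stay-away property, which together replace the usual geometric/convexity hypotheses needed for a priori estimates. I would proceed in the following order.

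\smallskip

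\textbf{Step 1: Short-time existence and openness.} First I would check that equation \eqref{eq:main}, linearized at a locally strictly $c$-convex $u$, is uniformly parabolic: the matrix $w_{ij} := u_{ij} + c_{ij}(x,T(x))$ is positive definite by the $c$-convexity assumption, and the linearization of the right-hand side in the second derivatives of $u$ is $w^{ij}$ (the inverse of $w_{ij}$), so the flow is a quasilinear parabolic PDE of the form $\partial_t u = F(D^2u, Du, x)$ with $F$ strictly elliptic near the initial data. Standard parabolic theory (e.g. Krylov, or the inverse function theorem on parabolic Hölder spaces) then gives a unique smooth solution on a maximal time interval $[0,T_{\max})$, and the set of $c$-convex Cauchy data evolving to $c$-convex solutions is open. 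The stay-away-from-singularity property is used here to guarantee that $T(x) = c\text{-exp}_x(-Du - Dc)$ stays in the region where $c$ is smooth, so the coefficients of the equation remain smooth as long as the solution does.

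\smallskip

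\textbf{Step 2: A priori estimates.} This is the technical heart. I would establish, uniformly on $[0,T_{\max})$:
(a) a $C^0$ bound — since $\int_M (\rho/\bar\rho(T))|\det c_{i\bar j}| \,dx$ and $\int_M e^{\partial_t u}$-type integrals are controlled, and since adding a constant to $u$ doesn't change the equation, one normalizes $u$ to have zero mean and uses the equation together with a maximum-principle argument on $\partial_t u$ (differentiate \eqref{eq:main} in $t$ to see $\partial_t u$ satisfies a linear parabolic equation, hence $\|\partial_t u\|_{C^0}$ is non-increasing or at least bounded), then integrates;
(b) a $C^1$ bound — this comes essentially for free since, by the stay-away property, $T(x)$ ranges in a fixed compact subset of $\bar M$ away from $\mathrm{sing}(c)$, which via \eqref{eq:potential} bounds $Du$ in terms of $Dc$ on that compact set;
(c) a $C^2$ bound — here is where $\mathsf{MTW}(\delta)$ enters decisively, exactly as in Ma–Trudinger–Wang \cite{MTW} and Trudinger–Wang \cite{TW}: one applies the maximum principle to a suitable quantity like $\log \lambda_{\max}(w_{ij}) + $ (lower-order corrections), and the third-order terms that would otherwise be uncontrolled are absorbed using the $\mathsf{MTW}(\delta)$ inequality with its positive constant $\delta$. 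The parabolic term $\partial_t u$ contributes with a favorable sign or is controlled by (a).
Once $C^2$ is known, the equation becomes uniformly parabolic with bounds independent of $t$, and
(d) higher regularity follows from Evans–Krylov (concavity of $F(D^2 u) = \log\det$ in the positive-definite cone) giving a uniform $C^{2,\alpha}$ bound, then parabolic Schauder bootstrapping gives uniform $C^m$ bounds for all $m$.

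\smallskip

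\textbf{Step 3: Long-time existence.} The uniform-in-time $C^m$ bounds from Step 2 preclude finite-time blow-up, so $T_{\max} = \infty$ and $u(\cdot, t)$ exists and is smooth for all time with all derivatives bounded uniformly in $t$.

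\smallskip

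\textbf{Step 4: Convergence.} I would show $\|\partial_t u(\cdot,t)\|_{C^0} \to 0$ exponentially. Since $v := \partial_t u$ solves a linear parabolic equation $\partial_t v = w^{ij} v_{ij} + (\text{lower order})$ with uniformly elliptic, uniformly bounded (in $C^\infty$) coefficients and the manifold is compact, a spectral-gap / Poincaré-type argument — or equivalently differentiating an energy like $\int_M v^2$ and using that $\int_M v \,\rho\, dx$ is conserved and equals $0$ (from $\frac{d}{dt}\int e^{?}$...; more precisely from the mass constraint, $\frac{d}{dt}\int_M \rho\,dx = 0$ forces a solvability condition making $\int v = 0$ in an appropriate weighted sense) — yields $\|v(\cdot,t)\|_{L^2} \le C e^{-\lambda t}$, and interpolation with the uniform higher-order bounds upgrades this to exponential decay in every $C^m$. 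Therefore $u(\cdot, t) \to u_\infty$ exponentially in $C^\infty$, where $u_\infty$ solves $\partial_t u \equiv 0$, i.e. $\ln\det(u_{ij} + c_{ij}) = \ln\rho - \ln\bar\rho(T) + \ln|\det c_{i\bar j}|$, which is precisely \eqref{eq:OT}. Finally, by the uniqueness theory for optimal transport (\cite{B, GM, Mc, MTW}), the map $T_\infty$ associated to $u_\infty$ via \eqref{eq:potential} is the unique optimizer of the transport problem.

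\smallskip

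\textbf{Main obstacle.} The crux is Step 2(c), the interior $C^2$ estimate uniform in time. Adapting the MTW/Trudinger–Wang second-derivative estimate to the parabolic setting is not automatic: one must choose the right test function, handle the extra $\partial_t u$ terms (which is where the bound on $\partial_t u$ from (a) and the structure of the linearized equation are needed), and, crucially, confirm that the stay-away-from-singularity property genuinely holds \emph{along the flow} (not just at $t=0$), so that all the derivatives $c_{ij}, c_{i\bar j}, \mathsf{MTW}$ curvature etc. are evaluated on a fixed compact set away from $\mathrm{sing}(c)$ where they enjoy uniform bounds. Establishing that the flow preserves the stay-away region — likely via a barrier or maximum-principle argument exploiting the $c$-convexity and the form of the equation near the singular set — is the subtle geometric point underpinning the whole analysis.
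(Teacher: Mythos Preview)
Your overall architecture matches the paper's, but there are two genuine gaps and one methodological divergence.

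\textbf{Stay-away along the flow.} You correctly flag this as the crux, but the mechanism you suggest (``a barrier or maximum-principle argument exploiting the $c$-convexity and the form of the equation near the singular set'') is not how it goes, and you already have the missing ingredient in Step 2(a) without realizing it. The quantity $\theta = \partial_t u$ satisfies $\theta_t = L\theta$, so $\|\theta\|_{C^0}$ is bounded by its initial value via the maximum principle. But $\theta = \ln\det DT - \ln\rho + \ln\bar\rho(T)$, so a bound on $\theta$ together with the bounds on $\rho,\bar\rho$ gives two-sided bounds on $\det DT$. The stay-away property is \emph{defined} so that a bound on $\det DT$ implies the graph of $T$ stays a fixed distance from $\mathrm{sing}(c)$. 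No barrier is needed; the stay-away hypothesis was engineered precisely so that the trivially available control on $\theta$ suffices. This closes the loop that makes all the cost-derivative bounds in the $C^2$ estimate uniform in time.

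\textbf{Exponential convergence.} The paper does not use a spectral-gap or $L^2$-energy argument; instead it proves a Li--Yau type Harnack inequality for positive solutions of $U_t = LU$, by writing $L = \psi\,\Delta_{\psi w}$ for a conformal factor $\psi$ and running the Li--Yau computation with the (time-varying, but uniformly controlled) metric $\psi w$. The Harnack inequality then gives exponential decay of the oscillation of $\theta$ by the standard iteration. Your spectral-gap route is plausible in spirit, but your justification for the mean-zero condition (``$\int_M v\,\rho\,dx$ is conserved and equals $0$'') is not correct: the actual constraint coming from mass balance is $\int_M e^\theta \rho = d$ where $d$ is the (as yet unknown) degree of $T$, which does not give a linear mean-zero condition on $\theta$. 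With time-varying, non-self-adjoint coefficients the energy/Poincar\'e approach would require more care than you indicate; the Harnack route sidesteps this entirely.

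\textbf{The limit solves optimal transport.} Your final sentence (``by the uniqueness theory for optimal transport, the map $T_\infty$ is the unique optimizer'') hides a real issue. The limit $u_\infty$ is a priori only \emph{locally} strictly $c$-convex, so $T_\infty$ is a priori only a local diffeomorphism and could be a covering map of degree $d>1$; such a $u_\infty$ would solve \eqref{eq:OT} without being the optimal-transport potential. The paper rules this out by a topological argument: if $T_\infty$ had degree $d>1$ then $(T_\infty)_\#\rho$ would be $d\cdot\bar\rho$, contradicting $\int_M\rho = \int_{\bar M}\bar\rho$. Only after this does one conclude $T_\infty$ is a global diffeomorphism, hence $u_\infty$ is globally $c$-convex, hence the genuine optimal-transport solution. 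This step is missing from your outline and is not supplied by the cited uniqueness theorems.
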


\begin{remark}
In fact, as a corollary to Theorem~\ref{main}, one can show that $C^{2}$
locally strictly $c$-convex functions are indeed globally $c$-convex in the
situation of Theorem~\ref{main}. See
Corollary~\ref{C:local to global c-convex} (see Section~\ref{SS:stay away} for definitions).
\end{remark}

The main consequences of Theorem~\ref{main} are in the following

\begin{corollary}
\label{C:main cor} The same existence and exponential convergence result for
the solution to \eqref{eq:main} as in Theorem~\ref{main} holds for the
following cases of $M=\bar{M}$, where the condition $\mathsf{MTW}(\delta)$,
$\delta>0$, and the stay-away-from-singularity property as well as regularity
of solutions to the elliptic equation \eqref{eq:OT} are shown in the papers
cited correspondingly :

\begin{itemize}
\item[(1)] $c=\text{\textrm{dist}}^{2}/2$ on the round sphere (see \cite{DL,
Lo2} for elliptic case);

\item[(2)] $c=\text{\textrm{dist}}^{2}/2$ on small $C^{4}$ perturbations of
the round sphere, but in a restricted sense that $\lambda_{1}, \lambda_{2}$ in
\eqref{eq:stay away} are restricted by the size of perturbation (see \cite{DG,
FRV} for elliptic case);

\item[(3)] $c=\text{\textrm{dist}}^{2}/2$ on the covering quotient
$S^{n}/\Gamma$, $\Gamma$ discrete group, e.g. $\mathbf{RP}^{n}$, (see
\cite{DG, KM2} for elliptic case), and their $C^{4}$ perturbations (see
\cite{LV} for elliptic case).

\item[(4)] $c=\text{\textrm{dist}}^{2}/2$ on the complex project space
$\mathbf{CP}^{n}$ and the quaternionic $n$-space $\mathbf{HP}^{n}$ with the
metric induced from the round sphere by the Hopf fibrations $S^{2n+1}
\to\mathbf{CP}^{n}$ and $S^{4n+3} \to\mathbf{HP}^{n}$ (see \cite{KM2} for
elliptic case).

\item[(5)] the far-field reflector antenna cost $c(x,\bar{x})=-\log|x-\bar
{x}|$ on the imbedded round sphere $S^{n}\subset\mathbf{R}^{n+1}$
(Schn\"{u}rer \cite{Sch} gave a proof in parabolic case, see \cite{Lo2} for
elliptic case.)
\end{itemize}
\end{corollary}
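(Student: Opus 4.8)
The plan is to obtain Corollary~\ref{C:main cor} as an immediate consequence of Theorem~\ref{main}: in each of the six listed cases one only has to check that the cost function $c$ on $M=\bar M$ satisfies \emph{all} the hypotheses of the theorem, namely the non-degeneracy conditions \eqref{eq:a2} and \eqref{eq:a29}, local semi-concavity of $c$ away from $\sing(c)$, the $\mathsf{MTW}(\delta)$ condition for some $\delta>0$, and the stay-away-from-singularity property. Once those are verified, Theorem~\ref{main} supplies all-time existence for the flow \eqref{eq:main}, the uniform-in-time $C^m$ bounds, and exponential convergence to the (unique) solution of \eqref{eq:OT} realizing the optimal transport map. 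So the ``proof'' is really a verification-and-citation argument; no new PDE analysis is required.

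For $c=\mathrm{dist}^2/2$ on the manifolds in (1)--(4), the singular set $\sing(c)$ is the locus of mutual cut points; off it, $c$ is smooth, $D_xc(x,\bar x)=-\exp_x^{-1}(\bar x)$ is injective in $\bar x$ (hence \eqref{eq:a2}), and the mixed Hessian $c_{i\bar j}$ is non-degenerate (hence \eqref{eq:a29}), while local semi-concavity of $\mathrm{dist}^2/2$ is classical. The substantive inputs are then exactly what the cited works provide: Delano\"e--Loeper and Loeper \cite{DL, Lo2} establish $\mathsf{MTW}(\delta)$ with $\delta>0$ and the stay-away property on the round sphere; Delano\"e--Ge and Figalli--Rifford--Villani \cite{DG, FRV} do so for $C^4$-small perturbations of the round sphere, but only after the constants $\lambda_1,\lambda_2$ of \eqref{eq:stay away} are taken small depending on the perturbation size --- this is precisely the ``restricted sense'' recorded in (2), which we carry over verbatim; Delano\"e--Ge and Kim--McCann \cite{DG, KM2}, together with Loeper--Villani \cite{LV} for $C^4$-perturbations, handle the quotients $S^n/\Gamma$ (e.g.\ $\mathbf{RP}^n$); and Kim--McCann \cite{KM2} treat $\mathbf{CP}^n$ and $\mathbf{HP}^n$ with the metrics induced by the Hopf fibrations. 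In every instance it is the \emph{strong} ($\delta>0$) form of MTW that is verified, which is what Theorem~\ref{main} needs.

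For the far-field reflector antenna cost $c(x,\bar x)=-\log|x-\bar x|$ on $S^n\subset\mathbf R^{n+1}$, the singular set is the diagonal $\{x=\bar x\}$; off it $c$ is real-analytic, \eqref{eq:a2}, \eqref{eq:a29} and local semi-concavity follow by direct computation, and the $\mathsf{MTW}(\delta)$ condition with $\delta>0$ together with the stay-away property (optimal $x$ and $T(x)$ stay uniformly apart) are established in Loeper \cite{Lo2} (the elliptic regularity theory for \eqref{eq:OT} in this case going back to Wang and Trudinger--Wang), while a direct parabolic treatment is given by Schn\"urer \cite{Sch}. Feeding these verifications into Theorem~\ref{main} in each of the six cases yields the corollary.

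The only real subtlety --- and the place where care is needed in the write-up rather than in any argument --- is the stay-away-from-singularity hypothesis. It is not a pointwise property of $c$ in isolation: it constrains where the optimal map, and along the flow the $c$-exponential image of $D_x u(\cdot,t)$, is allowed to travel, and for the perturbed metrics in (2) and (3) it holds only after shrinking $\lambda_1,\lambda_2$ in \eqref{eq:stay away} in terms of the perturbation size. I will therefore state those two cases so that this restriction is explicit, exactly as in the wording of Corollary~\ref{C:main cor}. With that bookkeeping in place, assembling the cited confirmations of $\mathsf{MTW}(\delta)$ and the stay-away property and invoking Theorem~\ref{main} completes the proof.
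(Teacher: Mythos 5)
Your proposal is correct and matches the paper's (essentially implicit) justification: the corollary is obtained by verifying the hypotheses of Theorem~\ref{main} --- conditions \eqref{eq:a2}, \eqref{eq:a29}, local semi-concavity, $\mathsf{MTW}(\delta)$ with $\delta>0$, and the stay-away-from-singularity property --- case by case via the cited references, with the same caveat about the restricted $\lambda_1,\lambda_2$ in cases (2) and (3) that the paper records. The only slip is cosmetic: you refer twice to ``six'' cases when there are five.
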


\begin{remark}
In all of the above cases one can take $u_{0}\equiv0$ as an initial condition.
\end{remark}

\begin{remark}
Recently, in \cite{FKM-product} regularity of solutions to the elliptic
equation \eqref{eq:OT} is shown for $c=\text{\textrm{dist}}^{2}/2$ on the
multiple products $S_{r_{1}}^{n_{1}}\times\cdots\times S_{r_{k}}^{n_{k}}$ of
round spheres of arbitrary dimension $n_{i}$ and size $r_{i}$, by showing the
stay-away-from-singularity property and using the results in \cite{FKM, LTW}.
However, we do not have the corresponding parabolic version yet.
\end{remark}

{ The proof of Theorem~\ref{main} is given in the rest of the paper;
especially, see Sections~\ref{S:long time} and \ref{S:exponential}. We use the
tensor maximum principle method for obtaining the second derivative estimates,
which} is essentially the calculation of Ma, Trudinger and Wang \cite{MTW},
given the stay away assumption. \ We use the parabolic Krylov-Safonov theory
to obtain { $C^{2,\alpha}$ estimates from $C^{1,1}$ estimates}, although we
note that a long and straightforward parabolic adaptation of the Calabi
computation in \cite{CNS} would furnish the $C^{1,1\text{ }}$to $C^{2,1}$
estimates, if one were interested in a \textquotedblleft bare-hands" proof.
\ Higher estimates follow by the Schauder theory. { In order to obtain the
exponential convergence, instead of relying on the quite general
Krylov-Safonov Harnack estimates, we directly prove Li-Yau type Harnack
estimates. Also, a simple topological argument is used to show that the
resulting limit function at $t=\infty$ is indeed the solution to the optimal
transport problem.}

The proof of Li-Yau {type} estimates illustrates the natural relevance of the
Riemannian submanifold geometry that is first seen in \cite{KM1}, further
refined in \cite{kimmccwar} and then again in \cite{Warren}. \ In this paper,
we see that the quantity $\theta$, defined{ to be the right-hand side of
(\ref{eq:main}),} which measures how far the map at a given time is from being
volume preserving, satisfies a heat-like equation, which makes it vulnerable
to the Li-Yau approach. For nonlinear equations, \ the idea of using the
linearized operator to define{ a Laplacian with respect to a} metric goes back
to Calabi \cite{Cb}, and also arises naturally when studying minimal
submanifold equations. \ In our case, there is an interesting conformal
factor, which forces us to perform a slight workaround in two dimensions.\ See
Section \ref{S:exponential convergence}.

Having practical applications in mind, {the exponential convergence in
Theorem~\ref{main} can be of independent interest since} one can view our
parabolic flow as an algorithm to construct the solution to optimal transport
problem. Indeed, Schnurer \cite{Sch} applied a parabolic flow to construct
reflector antennas for given light sources: this practical problem in
geometric optics has been known to be an optimal transport problem as found by
X.-J. Wang \cite{Wang96}. Recently, Kitagawa \cite{Kt} has a result similar to
ours, which deals with oblique boundary value problem on domains in
$\mathbf{R}^{n}$, for cost functions satisfying the $\mathsf{MTW}%
(0)$\ condition (see Section~\ref{SS:MTW}). \ 

The paper is organized as follows. First, after a few preliminaries in
Section~\ref{S:pre}, we obtain in Section~\ref{S:estimates} the
stay-away-from-singularity property along the flow which is then used together
with the maximum principle argument of Ma, Trudinger, and Wang \cite{MTW}
applied to the parabolic setting, to get uniform second derivative estimates
along the flow. In Section~\ref{S:long time} {we use these estimates to obtain
the long time existence and uniform derivative bounds of the solution.
Following some remarks about the linearized operator} in Section
\ref{S:exponential convergence}, we prove the Harnack inequality in Section
\ref{S:harnack}. \ Finally, in Section~\ref{S:exponential}, the exponential
convergence to the stationary solution at infinity is obtained, {and this
stationary solution is shown to be the solution to the optimal transport
problem, thus finishing the proof of Theorem~\ref{main}. As a corollary to
Theorem~\ref{main}, it is shown in Corollary~\ref{C:local to global c-convex}
that $C^{2}$ locally $c$-convex functions are globally $c$-convex.}

\textbf{Acknowledgement:} Y.-H.K. is pleased to thank Albert Chau for helpful
discussions. \ M.W. is pleased to thank Jun Kitagawa and Oliver Schn\"{u}rer
for helpful conversations. \ 

\section{Preliminaries}

\label{S:pre}

\subsection{ Ma-Trudinger-Wang curvature}

\label{SS:MTW} We first explain the $\mathsf{MTW}$ curvature that is first
introduced by Ma, Trudinger and Wang \cite{MTW} as a quanitity which can be
used to gaurantee interior regularity for solutions of the elliptic {equation
\eqref{eq:OT}}. This notion is further investigated by Loeper \cite{Lo} and
then by Kim and McCann \cite{KM1} (see also \cite{Km, KM2, LV, Villani, FR,
FRV-reg}). As shown by \cite{KM1}, the $\mathsf{MTW}$ tensor associated to
$c:M\times\bar{M}\rightarrow\mathbf{R}$, can be understood as the curvature of
a pseudo-metric
\begin{equation}
h=-\frac{1}{2}c_{i\bar{j}}\left(  dx^{i}\otimes d\bar{x}^{\bar{j}}+d\bar
{x}^{\bar{j}}\otimes dx^{i}\right)  \label{kmmetric}%
\end{equation}
defined on $M\times\bar{M}\setminus\text{\textrm{sing}}(c)$. For $(V,\bar
{V},W,\bar{W})\in T_{(x,\bar{x})}(M\times\bar{M})\times$ $T_{(x,\bar{x}%
)}(M\times\bar{M})$ (i.e. $V,W\in T_{x}M$, $\bar{V},\bar{W}\in T_{\bar{x}}%
\bar{M}$), it is computed in local coordinates \cite{MTW} as
\[
\mathsf{MTW}(V,\bar{V},W,\bar{W})=(-c_{ij\bar{k}\bar{l}}+c_{ij\bar{a}}%
c^{\bar{a}b}c_{\bar{k}\bar{l}b})V^{i}W^{j}\bar{V}^{\bar{k}}\bar{W}^{\bar{l}}.
\]

\ At this point we mention the $\left(  2,2\right)  $ form of the tensor,
which appeared first in {\cite{Tr}}. \ While the coordinate invariance
implicit in {\cite{Tr}} is easily checked directly by an elementary
calculation (without refering to curvature), we mention how it comes from
(\ref{kmmetric}). \ Due to the structure of the metric (\ref{kmmetric}) the
sharp operator, { which identifies covectors with vectors,} on $M\times\bar
{M}$ is a actually a map from $T^{\ast}M$ to $T\bar{M}$. \ Thus we have the
following $\left(  2,2\right)  $ tensor:
\begin{align}
\mathsf{MTW}  &  :TM\times T^{\ast}M\times TM\times T^{\ast}M\rightarrow
\mathbb{R}\nonumber\\
\mathsf{MTW}(V,\eta,W,\zeta)  &  ={\left(  -c_{ij\bar{r}\bar{p}}+c_{ij\bar{s}%
}c^{\bar{s}m}c_{m\bar{p}\bar{r}}\right)  V^{i}W^{j}c^{\bar{p}k}c^{\bar{r}%
l}\eta_{k}\zeta_{l}.\label{MTWm}}%
\end{align}
This formulation of the tensor arises naturally in the MTW\ calcuation and has
the advantage of having quantities all in terms of vectors and covectors on
$M.$ \ We say that $c$ satisfies$\ \mathsf{MTW}(\delta)$\ condition with
respect to a metric $g$ on $M$ if, for $\delta>0$
\begin{equation}
\mathsf{MTW}(V,\eta,V,\eta)\geq\delta\left\Vert V\right\Vert _{g}
^{2}\left\Vert \eta\right\Vert _{g}^{2} \label{A3strong}%
\end{equation}
for all vector covector pairs with $\eta(V)=0.$ The metric $g$ on the source
needs nothing to do with the cost function, but by fixing the metric,
(\ref{A3strong}) becomes invariant.

\subsection{{Stay away from singularity property for a cost function and
$c$-convexity of potential functions}}

\label{SS:stay away} Fix Riemannian metrics $g$, $\bar{g}$ on $M$, $\bar{M}$,
respectively. We say that a cost { $c:M\times\bar{M}\rightarrow\mathbf{R}%
\cup\{+\infty\}$} has the \emph{stay-away-from-singularity property} if for
each $0<\lambda_{1},\lambda_{2}$, there exists $\epsilon>0$ depending only on
$\lambda_{1}$, $\lambda_{2}$ and $c$ such that
\begin{equation}
\lambda_{1}\leq|\det DT|\leq\lambda_{2}\Longrightarrow\text{\textrm{dist}%
}(\mathrm{graph}(T),\text{\textrm{sing}}(c))\geq\epsilon\label{eq:stay away}%
\end{equation}
for any differentiable map $T:M\rightarrow\bar{M}$ given by
\eqref{eq:potential} with {$C^{2}$} locally strictly $c$-convex potential
function $u:M\rightarrow\mathbf{R}$. Here, the {$C^{2}$} potential function
$u$ is called \emph{locally strictly $c$-convex} if
\begin{align}
\label{eq:locally c-convex}\text{ $u_{ij}(x)+c_{ij}(x,T(x))$ is positive
definite for each $x\in M$.}%
\end{align}
{This local strict $c$-convexity on $u$ is equivalent to that the map $T$ is a
local diffeomorphism.} In the above, $\det DT$ is computed with respect to the
metrics $g$ and $\bar{g}$, and the distance function $\text{\textrm{dist}}$ is
with respect to the product metric $g\oplus\bar{g}$ on $M\times\bar{M}$.

{To be a solution to the optimal transportation problem for $\rho$ and
$\bar{\rho}$, a solution $u:M\rightarrow\mathbf{R}$ of \eqref{eq:OT} has to be
\emph{a global $c$-convex function}. Namely, $u$ is given as a pair
$(u,\bar{u})$ as
\[
u(x)=\sup_{\bar{x}\in\bar{M}}-c(x,\bar{x})-\bar{u}(\bar{x}),\qquad\bar{u}%
(\bar{x})=\sup_{x\in M}-c(x,\bar{x})-u(x)
\]
for all $(x,\bar{x})\in M\times\bar{M}.$ If $M$ is a closed manifold and the
cost function $c$ is locally semi-concave on the set where its value is
finite, then for any $C^{2}$ locally strictly $c$-convex function $u$ on $M$,
the \emph{global $c$-convexity} is implied if the corresponding map $T$ via
\eqref{eq:potential} is a global diffeomorphism. To see this, suppose $u$ is
not globally $c$-convex. Then, there exists $x_{0}\in M$ and $x_{1}\neq x_{0}$
such that
\[
u(x_{1})+c(x_{1},T(x_{0}))<u(x_{o})+c(x_{0},T(x_{0}))
\]
Thus on the closed manifold $M$, there is an absolute minimum point $z_{0}\neq
x_{0}$, of the function $u(\cdot)+c(\cdot,T(x_{0}))$. Near $z_{0}$ the cost
$c(\cdot,T(x_{0})$ should be finite by the minimum property, thus locally
semi-concave by the assumption on $c$. Since $u$ is $C^{2}$, the function
$u(\cdot)+c(\cdot,\bar{y})$ is semi-concave too, hence superdifferentiable.
Thus, this function cannot achieve a minimum at points of
nondifferentiability, so we conclude that $u(\cdot)+c(\cdot,T(x_{0}))$ is
differentiable at $z_{0}$. Therefore,
\begin{align*}
Du(z_{0})+c(z_{0},\bar{y}) &  =0\\
Du(x_{0})+c(x_{0},\bar{y}) &  =0.
\end{align*}
In particular the local diffeomorphism $T$ is not one-to-one, showing the
claimed equivalence.}

The stay-away-from-singularity property is shown for the round sphere
$M=\bar{M}=S^{n}$ with $c=\text{\textrm{dist}}^{2}/2$ by Delano\"{e} and
Loeper \cite{DL}, and later also for the reflector antenna cost $c=-\log
|x-\bar{x}|$ on $S^{n}\subset\mathbf{R}^{n+1}$ \cite{Lo2} , and for
$c=\text{\textrm{dist}}^{2}/2$ on the perturbations of the round sphere and
its discrete quotient \cite{DG, LV, KM2}, on the Hopf fibration quotients of
the sphere such as $\mathbf{CP}^{n}$ and $\mathbf{HP}^{n}$ \cite{KM2}.
{Recently, such result is also shown for $c=\text{\textrm{dist}}^{2}/2$ on the
products of round spheres $M=\bar{M}=S_{r_{1}}^{n_{1}}\times\cdots\times
S_{r_{k}}^{n_{k}}$ of arbitrary dimension and size \cite{FKM-product}. }

\begin{remark}
Strictly speaking, the stay-away results in the papers \cite{DL, Lo2, DG, LV,
KM2, FKM-product} are shown with respect to globally $c$-convex functions,
which in general differ from locally $c$-convex ones. However, in these cases,
one can actually prove such stay-away property with respect to locally strict
$c$-convex functions. Alternatively, it can be shown that in those cases,
locally $c$-convex functions are globally $c$-convex, using the results of
\cite{TW2, Villani, TW3, FRV-reg}.
\end{remark}

\section{Estimates}

\label{S:estimates} In the following, $T$ will always denote the map given by
the relation \eqref{eq:potential}, which is equivalent in any coordinate chart
$x=(x^{1},\cdots,x^{n})$, to
\begin{equation}
u_{i}(x)+c_{i}(x,T(x))=0. \label{contact}%
\end{equation}
Further differentiation gives
\[
u_{ij}+c_{ij}(x,T)+c_{i\bar{s}}(x,T)T_{j}^{\bar{s}}=0.
\]
Here and henceforth we use $\bar{r},\bar{s},\bar{p},\bar{v}$, etc to denote
differentiation in the second variable of $c(x,\bar{x})$, with the map $T$
represented in coordinates as $T^{\bar{s}}$, $\bar{s}=1,\cdots,n$. Taking the
determinant of the above equation gives the elliptic equation (\ref{eq:OT}). A
useful observation is that the matrix $w_{ij}(x):=u_{ij}(x)+c_{ij}(x,T(x))$
(when positive definite) gives a Riemannian metric on $M$ by the following
identity:
\[
(Id\times T)^{\ast}h(\frac{\partial}{\partial x^{i}},\frac{\partial}{\partial
x^{j}})=w_{ij}%
\]
where $(Id\times T)^{\ast}h$ is the pull-back of $h$ by $Id\times T:M\ni
x\mapsto(x,T(x))\in M\times M$.

\ \ For future reference we note the following results of differentiating
\qquad%
\begin{equation}
T_{j}^{\bar{s}}=-c^{i\bar{s}}w_{ij}, \label{wT}%
\end{equation}%
\begin{equation}
T_{jk}^{\bar{s}}=-c^{i\bar{s}}\frac{\partial}{\partial x^{k}}w_{ij}%
+c^{i\bar{p}}c^{l\bar{s}}(c_{l\bar{p}k}+c_{l\bar{p}\bar{r}}T_{k}^{\bar{r}%
})w_{ij}. \label{diff2}%
\end{equation}
Here, to be clear, we write $\frac{\partial}{\partial x^{k}}w_{ij}$ to{
express coordinate derivatives} of this tensor. \ Subscripts for other
functions mean the corresponding coordinate derivatives. Superscripts such as
$c^{i\bar{s}}$ denotes the $(i,\bar{s})$-entry of the inverse matrix of
$\left(  \frac{\partial^{2}}{\partial x^{a}\bar{x}^{\bar{b}}}c\right)  $, etc.
\ Defining%
\begin{equation}
\theta(x,u)=\ln\det w_{ij}-\ln\rho(x)+\ln\bar{\rho}(T(x))-\ln|\det c_{i\bar
{s}}(x,T(x))|, \label{theta}%
\end{equation}
the flow (\ref{eq:main}) is rewritten as
\begin{equation}
u_{t}=\theta. \label{1}%
\end{equation}
Here and henceforth, the subscript $t$ denotes the time derivative.

\subsection{Stay away from singularity}

\label{SS:stay away estimates}

We make use of the linearized operator $L$ of $\theta$ at a function $u$. {
This operator $L$ is covariant, and as} seen from [TW] the operator $L$ has
the following local expression.
\begin{align}
\label{eq:L}Lv=w^{ij}v_{ij}-\left(  w^{ij}c_{ij\bar{s}}c^{\bar{s}k}-(\bar
{\rho})^{-1}\bar{\rho}_{\bar{r}}c^{k\bar{r}}-c^{i\bar{s}}c_{\bar{s}i\bar{p}%
}c^{\bar{p}k}\right)  v_{k}.
\end{align}
where all coefficients are computed at $(x,T(x))$. It is important to notice
that there is no zeroth-order term in $L$ so that we can apply parabolic
maximum principle.

\begin{lemma}
\label{thetaevolution} If $u$ is a solution of (\ref{1}), then $\theta$
satisfies
\begin{align}
\label{eq:theta flow}\theta_{t}=L\theta.
\end{align}

\end{lemma}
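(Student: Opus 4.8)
The plan is to differentiate the flow equation \eqref{1} in time and show that the resulting equation for $\theta_t$ matches the action of the linearized operator $L$ on $\theta$. Since $u_t = \theta$, it is enough to compute $\partial_t \theta$ by the chain rule, treating $\theta = \theta(x, u)$ as in \eqref{theta} with the $u$-dependence entering through the Hessian term $\ln\det w_{ij}$ and through the map $T = T(x)$ determined by $u$ via the contact relation \eqref{contact}. So first I would write $\partial_t \theta$ as the sum of the contributions from differentiating $\ln\det w_{ij}$, from $\ln\bar\rho(T(x))$, and from $\ln|\det c_{i\bar s}(x,T(x))|$, with $\partial_t u = \theta$ and hence $\partial_t T^{\bar s} = $ (the $\bar s$-component of the $t$-derivative of $T$) expressed in terms of $\theta$ and its first derivatives by differentiating \eqref{contact} in $t$.

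The key computational steps are then: (1) From $\partial_t(u_i + c_i(x,T)) = 0$ one gets $\theta_i + c_{i\bar s} \partial_t T^{\bar s} = 0$, hence $\partial_t T^{\bar s} = -c^{i\bar s}\theta_i$, which is precisely the analogue of \eqref{wT} with $w_{ij}$ replaced by $\theta_{ij}$... more precisely, differentiating $w_{ij} = u_{ij} + c_{ij}(x,T)$ in $t$ gives $\partial_t w_{ij} = \theta_{ij} + c_{ij\bar s}\partial_t T^{\bar s} = \theta_{ij} - c_{ij\bar s}c^{k\bar s}\theta_k$. (2) Then $\partial_t \ln\det w_{ij} = w^{ij}\partial_t w_{ij} = w^{ij}\theta_{ij} - w^{ij}c_{ij\bar s}c^{k\bar s}\theta_k$, which already produces the second-order term $w^{ij}\theta_{ij}$ of $L$ and the first piece of the first-order coefficient. (3) For the density term, $\partial_t \ln\bar\rho(T(x)) = \bar\rho^{-1}\bar\rho_{\bar r}\partial_t T^{\bar r} = -\bar\rho^{-1}\bar\rho_{\bar r}c^{k\bar r}\theta_k$, giving the $(\bar\rho)^{-1}\bar\rho_{\bar r}c^{k\bar r}$ term. (4) For the last term, $\partial_t \ln|\det c_{i\bar s}(x,T)| = c^{i\bar s}\partial_t c_{i\bar s}(x,T) = c^{i\bar s}c_{i\bar s\bar p}\partial_t T^{\bar p} = -c^{i\bar s}c_{i\bar s\bar p}c^{k\bar p}\theta_k$. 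Collecting all four contributions and comparing with the explicit formula \eqref{eq:L} for $L$ should give $\theta_t = L\theta$ exactly, with the signs matching because of the $-\ln|\det c_{i\bar s}|$ appearing with a minus in the definition of $\theta$.

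The main obstacle — really the only delicate point — is bookkeeping the signs and index contractions in step (4): one must be careful that $\partial_t c_{i\bar s}(x, T(x))$ picks up only the $T$-dependence (the $x$-variable is held fixed under $\partial_t$), that $\partial_t \det c_{i\bar s} = \det c_{i\bar s}\cdot c^{\bar s i}\partial_t c_{i\bar s}$ with the inverse matrix contracted in the correct order, and that $c_{i\bar s\bar p} = \partial_{\bar x^{\bar p}} c_{i\bar s}$ is symmetric in the barred indices so that the contraction with $c^{k\bar p}$ matches the term $c^{i\bar s}c_{\bar s i\bar p}c^{\bar p k}$ in \eqref{eq:L}. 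I would verify that the coefficient of $\theta_k$ assembled from (2), (3), (4) is exactly $-(w^{ij}c_{ij\bar s}c^{\bar s k} - (\bar\rho)^{-1}\bar\rho_{\bar r}c^{k\bar r} - c^{i\bar s}c_{\bar s i\bar p}c^{\bar p k})$, which is the first-order coefficient in $L$. No maximum principle or estimate is needed here; it is a pure identity following from the chain rule and the contact relation \eqref{contact}, so the proof is short once the indices are handled with care.
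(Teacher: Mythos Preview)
Your proposal is correct and follows essentially the same approach as the paper: both use the chain rule together with $u_t=\theta$ to conclude $\theta_t = L(\partial_t u) = L\theta$. The paper's proof is a one-liner invoking the definition of $L$ as the linearized operator of $\theta$ at $u$ (with the explicit formula \eqref{eq:L} taken from [TW]), whereas you unpack that linearization term by term and match against \eqref{eq:L}; your computation is thus a direct verification of the same identity.
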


\begin{proof}
This follows from the definition of linearized operator $\frac{\partial
}{\partial t} \left(  \theta(x, u)\right)  = L \frac{\partial}{\partial t} u$
and using the equation \eqref{1}.
\end{proof}

\begin{proposition}
[Stay-away-from-singularity property along the flow]\label{P:stay away} Let
$M$, $\bar{M}$ be two $n$-dimensional compact Riemannian manifolds without
boundary, equipped with cost function $c:M\times\bar{M}\rightarrow\mathbf{R}$
satisfying \eqref{eq:a2} and \eqref{eq:a29}. Let $\rho,\bar{\rho}$ be smooth,
positive, probability densities on $M$, $\bar{M}$, respectively. Suppose the
cost $c$ satisfies the stay-away-from-singularity property (see
Section~\ref{SS:stay away}). There exists a constant
\[
\varepsilon=\varepsilon\left(  \theta\big|_{t=0},\Vert\ln\rho\Vert_{\infty
},\Vert\ln\bar{\rho}\Vert_{\infty}\right)  >0,
\]
such that in the time interval of existence of the smooth solution $u$ to the
parabolic equation \eqref{eq:main}, $\text{\textrm{dist}}(\text{\textrm{graph}%
}(T),\text{\textrm{sing}}(c))\geq\varepsilon$ for $T$ given by
\eqref{eq:potential}. Here, $\varepsilon$ is independent of the time $t$.
\end{proposition}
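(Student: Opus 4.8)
The plan is to combine Lemma~\ref{thetaevolution} with the parabolic maximum principle and then feed the resulting bound on $\theta$ into the stay-away hypothesis \eqref{eq:stay away}. The key observation is that $\theta=u_t$ satisfies the linear parabolic equation $\theta_t = L\theta$, where $L$ (see \eqref{eq:L}) is a linear second-order operator with no zeroth-order term, whose leading coefficient $w^{ij}$ is positive definite precisely when $w_{ij}$ is (which holds along the flow, since local strict $c$-convexity is preserved — this is where one uses that the solution under consideration is a smooth solution with $w_{ij}>0$). Hence the parabolic maximum principle applies on each time slice: $\sup_M \theta(\cdot,t)$ is nonincreasing and $\inf_M \theta(\cdot,t)$ is nondecreasing in $t$, so
\begin{equation}
\min_M \theta(\cdot,0) \;\le\; \theta(x,t) \;\le\; \max_M \theta(\cdot,0) \label{eq:thetabdd}
\end{equation}
for all $(x,t)$ in the interval of existence. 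Note this gives a bound depending only on $\theta|_{t=0}$, independent of $t$, as claimed.

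Next I would translate the bound \eqref{eq:thetabdd} into a two-sided bound on $|\det DT|$. From the definition \eqref{theta} of $\theta$ and the relation \eqref{wT}, namely $T^{\bar s}_j = -c^{i\bar s}w_{ij}$, one computes $\det DT = (\det T^{\bar s}_j) = \pm(\det c^{i\bar s})(\det w_{ij})$ up to the coordinate Jacobian factors that convert $\det DT$ (taken with respect to the fixed metrics $g,\bar g$) into the coordinate determinants; more precisely, exponentiating \eqref{theta} gives
\begin{equation}
\det w_{ij} \;=\; e^{\theta}\,\rho(x)\,\bar\rho(T(x))^{-1}\,\bigl|\det c_{i\bar s}(x,T(x))\bigr|, \label{eq:detw}
\end{equation}
and then $|\det DT|$, measured with $g$ and $\bar g$, equals $e^{\theta}\,\rho(x)/\bar\rho(T(x))$ times the ratio of the Riemannian volume densities to the coordinate densities — a smooth, strictly positive, bounded quantity since $M,\bar M$ are compact. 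Combining \eqref{eq:thetabdd} with $\|\ln\rho\|_\infty,\|\ln\bar\rho\|_\infty<\infty$ yields constants $\lambda_1,\lambda_2>0$, depending only on $\theta|_{t=0}$, $\|\ln\rho\|_\infty$, $\|\ln\bar\rho\|_\infty$ (and on the fixed geometry of $M,\bar M$), with $\lambda_1 \le |\det DT(\cdot,t)| \le \lambda_2$ uniformly in $t$.

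Finally, since for each fixed $t$ the function $u(\cdot,t)$ is $C^2$ and locally strictly $c$-convex (the matrix $w_{ij}$ is positive definite, which is exactly condition \eqref{eq:locally c-convex}), the map $T=T(\cdot,t)$ is a differentiable map arising from \eqref{eq:potential} with a locally strictly $c$-convex potential. The stay-away-from-singularity property \eqref{eq:stay away} then applies with these $\lambda_1,\lambda_2$, producing $\varepsilon=\varepsilon(\lambda_1,\lambda_2,c)>0$ — hence $\varepsilon = \varepsilon(\theta|_{t=0},\|\ln\rho\|_\infty,\|\ln\bar\rho\|_\infty)$ — such that $\dist(\mathrm{graph}(T(\cdot,t)),\sing(c))\ge\varepsilon$ for every $t$ in the interval of existence. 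This $\varepsilon$ is manifestly $t$-independent.

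The main obstacle, and the only place requiring care, is the bookkeeping in the second step: one must check that preservation of local strict $c$-convexity along the flow (so that $w^{ij}$ is positive definite and the maximum principle for $L$ is legitimate) and the conversion between coordinate determinants and metric-normalized determinants are both valid and yield bounds with the asserted dependence. The preservation of $w_{ij}>0$ is automatic on the interval where the stated smooth solution exists — indeed it is part of what "smooth solution of \eqref{eq:main}" means, since $\ln\det w_{ij}$ must be defined — so the argument is not circular; the remaining estimates are routine given compactness.
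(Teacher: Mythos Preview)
Your proposal is correct and follows essentially the same approach as the paper: apply Lemma~\ref{thetaevolution} and the parabolic maximum principle (using that $L$ has no zeroth-order term) to bound $\theta$ by its initial data, rewrite $\theta$ as $\ln|\det DT|-\ln\rho+\ln\bar\rho(T)$ to extract a two-sided bound on $|\det DT|$, and then invoke the stay-away hypothesis~\eqref{eq:stay away}. The paper's proof is terser (three sentences), but your additional remarks on preservation of $w_{ij}>0$ and the coordinate-versus-metric determinant conversion are just the bookkeeping the paper leaves implicit.
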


\begin{proof}
It follows from Lemma 3.1 and the parabolic maximum principle applied to
\eqref{eq:theta flow} that $\theta$ is bounded along the flow. Now as
\[
\theta(x,t)=\ln\det DT-\ln\rho(x) - \ln\bar\rho(T (x)),
\]
a bound on $\|\ln\det DT\|_{\infty}$ \ follows from the bound on $\theta.$
This together with the stay-away-property \eqref{eq:stay away} completes the proof.
\end{proof}

\begin{corollary}
[Uniform bounds on derivatives of $c$ along the flow]\label{C:uniform bounds}
Suppose the same assumption as in Proposition~\ref{P:stay away} holds. Then,
in the time interval of existence of the smooth solution $u$ to the flow of
\eqref{eq:main}, each derivative of $c$ in any order such as $c_{ij}%
,c_{i\bar{s}},c_{ijk},c_{ij\bar{k}},$ etc. at $(x,T(x))$ computed in any fixed
coordinates, is uniformly bounded, especially independent of $t$. Moreover,
$c_{i\bar{s}}$ is uniformly away from $0$.
\end{corollary}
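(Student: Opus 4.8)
The plan is to deduce everything from Proposition~\ref{P:stay away} plus compactness. Since $M$ and $\bar{M}$ are compact, $M\times\bar{M}$ is compact, and by Proposition~\ref{P:stay away} the graph of $T$ stays in the closed set
\[
K_{\varepsilon}=\{(x,\bar{x})\in M\times\bar{M}:\operatorname{dist}((x,\bar{x}),\operatorname{sing}(c))\geq\varepsilon\}
\]
for all $t$ in the interval of existence, with $\varepsilon$ independent of $t$. The key point is that $K_{\varepsilon}$ is compact and is contained in $(M\times\bar{M})\setminus\operatorname{sing}(c)$, the open set on which $c$ is $C^{\infty}$. So any fixed partial derivative of $c$, in any fixed coordinate system, is a continuous (indeed smooth) function on the neighborhood of $K_{\varepsilon}$ where those coordinates are defined; covering $K_{\varepsilon}$ by finitely many such coordinate patches and taking the maximum of $|\partial^{\alpha}c|$ over each patch gives a finite bound depending only on $\varepsilon$ (hence only on the data in Proposition~\ref{P:stay away}) and on the chosen coordinates, but not on $t$. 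This handles $c_{ij},c_{i\bar{s}},c_{ijk},c_{ij\bar{k}}$, and derivatives of every order.

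The remaining assertion, that $c_{i\bar{s}}$ is uniformly bounded away from $0$, should be phrased as: the matrix $(c_{i\bar{s}})$ has determinant bounded away from $0$ in absolute value along the graph of $T$, uniformly in $t$. This follows the same way: by hypothesis \eqref{eq:a29}, $\det c_{i\bar{s}}\neq0$ on $(M\times\bar{M})\setminus\operatorname{sing}(c)$, so $|\det c_{i\bar{s}}|$ is a positive continuous function on the compact set $K_{\varepsilon}$ and therefore attains a positive minimum $c_{0}>0$ there. Since $\operatorname{graph}(T)\subset K_{\varepsilon}$ for all $t$, we get $|\det c_{i\bar{s}}(x,T(x))|\geq c_{0}$ for all $x$ and all $t$. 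Combined with the uniform upper bound on the entries $c_{i\bar{s}}$ from the previous paragraph, this shows the inverse matrix $(c^{i\bar{s}})$ also has uniformly bounded entries, which is what is actually used in the subsequent estimates.

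There is essentially no obstacle here: the only nontrivial input is Proposition~\ref{P:stay away}, which is already established, and the rest is the standard fact that a continuous function on a compact set is bounded (and a nowhere-vanishing one is bounded away from zero). The one point worth stating carefully is that the coordinate systems are \emph{fixed} in advance — the bounds depend on the choice of finite atlas — and that the relevant quantity for \eqref{eq:a29} is the determinant (equivalently, all eigenvalues of the symmetrized form) rather than individual entries, since it is $|\det c_{i\bar{s}}|$ that appears in $\theta$ and $c^{i\bar{s}}$ that appears in $L$ and in \eqref{wT}, \eqref{diff2}.
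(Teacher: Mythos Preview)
Your proof is correct and is precisely the argument the paper has in mind. In fact the paper gives no explicit proof of this corollary at all, treating it as an immediate consequence of Proposition~\ref{P:stay away} together with compactness and smoothness of $c$ away from $\text{\rm sing}(c)$; your write-up simply spells out that implication carefully, including the correct interpretation of ``$c_{i\bar s}$ uniformly away from $0$'' as a lower bound on $|\det c_{i\bar s}|$ (equivalently, a bound on the inverse $c^{i\bar s}$).
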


{ }

\subsection{Second derivative estimates}

\label{SS:second derivative estimates}

On{ the both source} and target manifolds, we work in normal coordinates
throughout this section. Noting that the transition functions between charts
change in a predictible way, an $\varepsilon/4$ covering argument can be used
to show there is a uniform bound on (in particular third and fourth)
derivatives of the cost function which doesn't depend on the charts,
regardless of the point at the origin. \ Thus throughout this subsection, we
assume the result of Corollary~\ref{C:uniform bounds}, which is an essential
ingredient in the following estimates. Following the calculations given in
\cite{MTW} we show

\begin{lemma}
\label{secondderivativelemma} Assume the uniform bounds in
Corollary~\ref{C:uniform bounds}. Let $u$ be a solution of (\ref{1}), and let
$v$ be a coordinate direction in a local chart. \ Then
\begin{align}
\frac{\partial}{\partial t}w_{vv}-\left(  {Lw}\right)  {_{vv}}  &  \leq
-w^{ij}\left(  -c_{ij\bar{p}\bar{r}}+c_{ij\bar{s}}c^{\bar{s}m}c_{m\bar{r}%
\bar{p}}\right)  c^{\bar{p}k}c^{\bar{r}l}w_{lv}w_{kv}\label{mtwlemma}\\
&  +C(\rho,\bar{\rho},D^{4}c,g)(1+\sum w_{ii}^{2}+\sum w^{ii}\sum
w_{jj}).\nonumber
\end{align}
Here $L$ is operating on the $(2,0)$ tensor using metric covariant
differentiation. \ \ 
\end{lemma}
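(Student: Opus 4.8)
The plan is to run the standard Ma--Trudinger--Wang second-derivative computation, but tracked through the parabolic operator $\partial_t - L$ rather than just $L$. Fix a coordinate direction $v$ and differentiate the flow equation \eqref{1} twice in the $x^v$ direction. Since $\theta(x,u)$ depends on $u$ through $w_{ij} = u_{ij} + c_{ij}(x,T(x))$ and on $x$ explicitly, the first $x^v$-derivative of $\partial_t u = \theta$ gives $\partial_t u_v = L u_v + (\text{explicit } x\text{-terms})$, using that $L$ is precisely the linearization of $\theta$ in $u$; differentiating once more produces $\partial_t u_{vv} = L u_{vv} + (\text{quadratic in } D^3 c, D^2 T, w^{ij} \text{ terms}) + (\text{explicit})$. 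The key algebraic identity is that the top-order term coming from differentiating $\ln\det w_{ij}$ twice, after commuting coordinate derivatives past $w^{ij}$ and using \eqref{wT}--\eqref{diff2} to express $T^{\bar s}_j$, $T^{\bar s}_{jk}$, is exactly the MTW expression $-w^{ij}\bigl(-c_{ij\bar p\bar r} + c_{ij\bar s}c^{\bar s m}c_{m\bar r\bar p}\bigr)c^{\bar p k}c^{\bar r l}w_{lv}w_{kv}$ up to terms that are controlled by the bounds from Corollary~\ref{C:uniform bounds}; this is the heart of the MTW calculation and is what produces the first line of \eqref{mtwlemma}.

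After that, I would convert from coordinate second derivatives $u_{vv}$ to the tensorial quantity $w_{vv} = u_{vv} + c_{vv}(x,T(x))$, and interpret $L$ as acting on the $(2,0)$-tensor $w$ via metric covariant differentiation as stated; the difference between $(Lw)_{vv}$ and $L(w_{vv})$ in a fixed chart involves only Christoffel symbols of the background metric $g$ contracted with $w^{ij}$, $w_{ij}$, and lower derivatives, hence is absorbed into the error term $C(\rho,\bar\rho,D^4 c, g)(1 + \sum w_{ii}^2 + \sum w^{ii}\sum w_{jj})$. Likewise, differentiating the $-\ln\rho + \ln\bar\rho(T) - \ln|\det c_{i\bar s}(x,T)|$ pieces of $\theta$ twice produces terms involving $D^2\rho$, $D^2\bar\rho$, $D^3 c$, $D^4 c$, $T^{\bar s}_j$, $T^{\bar s}_{jk}$; using \eqref{wT} one has $T^{\bar s}_j = -c^{i\bar s}w_{ij}$, which is why the error bound is allowed to be quadratic in the $w_{ii}$ and also to contain the factor $\sum w^{ii}\sum w_{jj}$ (the latter arising from places where $w^{ij}$ multiplies two factors each carrying one $w$). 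Every coefficient appearing — $c^{i\bar s}$, $c_{ij}$, $c_{ijk}$, $c_{ij\bar k\bar l}$, $\bar\rho^{-1}\bar\rho_{\bar r}$, and so on, all evaluated at $(x,T(x))$ — is uniformly bounded and $c_{i\bar s}$ is uniformly nondegenerate by Corollary~\ref{C:uniform bounds}, so the constant $C$ is genuinely uniform in time.

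The remaining bookkeeping step is to check that all the ``junk'' terms really do fit inside the claimed error expression: terms linear in $D^3 w$ or $D^2 w$ should not appear at top order because, as in the elliptic MTW argument, the third-order $w$-terms cancel by symmetry of mixed coordinate derivatives (the Schwarz identity $w_{ij,k}$ symmetric in the appropriate indices), leaving only terms of the form $w^{ij} \cdot (\text{bounded}) \cdot w_{\cdot v} w_{\cdot v}$ (which is the MTW main term), $w^{ij} w^{kl} \cdot (\text{bounded}) \cdot w_{\cdot\cdot} w_{\cdot\cdot}$ (absorbed into $\sum w^{ii}\sum w_{jj}$ after estimating contractions), $(\text{bounded})\cdot \sum w_{ii}^2$, and purely bounded terms.

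The main obstacle I expect is precisely this matching of the third-order-derivative cancellation in the parabolic setting: in \cite{MTW} one differentiates the elliptic equation and the $u_{vvij}$ terms are handled by bringing them to the left via $w^{ij} u_{vvij}$; here the analogous term is absorbed into $(Lw)_{vv} = w^{ij}(w_{vv})_{ij} + (\text{lower})$, so I must verify carefully that $\partial_t w_{vv} - (Lw)_{vv}$ — as opposed to $\partial_t u_{vv} - w^{ij} u_{vvij}$ — is the right combination for which all fourth-order-in-$u$ terms drop out, and that the commutator terms from interchanging $\partial/\partial x^v$ with the coefficients of $L$ and from the covariant-derivative interpretation of $(Lw)_{vv}$ are only lower order. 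Once that structural cancellation is confirmed, the inequality \eqref{mtwlemma} follows by discarding the (sign-definite or harmless) remaining terms into $C(\rho,\bar\rho,D^4 c,g)(1 + \sum w_{ii}^2 + \sum w^{ii}\sum w_{jj})$, exactly as in the MTW estimate but with $\partial_t w_{vv}$ on the left.
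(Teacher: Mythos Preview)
Your proposal is correct and follows essentially the same route as the paper: differentiate the flow equation twice, identify the MTW curvature term via \eqref{wT}--\eqref{diff2}, and absorb everything else into the stated error using the bounds of Corollary~\ref{C:uniform bounds}. The paper organizes the bookkeeping slightly differently---it computes $(Lw)_{11}$ and $\partial_t w_{11}$ separately, shows both contain $\theta_{11}$ (and a matching $c_{11\bar s}c^{k\bar s}\theta_k$ term) which cancel upon subtraction, and drops the sign-definite term $\partial_1 w^{ij}\,\partial_1 w_{ij}\le 0$ when bounding $\theta_{11}$---but this is exactly your ``differentiate $u_t=\theta$ twice and convert to $w$'' strategy in disguise. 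One small correction: the third-order cancellation is not literally Schwarz symmetry of $w_{ij,k}$ (which fails because of the $c_{ij}(x,T(x))$ piece); the paper instead uses $\partial_j w_{ki}-\partial_k w_{ij}=c_{ki\bar s}T^{\bar s}_j-c_{ij\bar s}T^{\bar s}_k$, so the commutator is lower order (bounded by $w$) rather than zero.
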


\begin{proof}
To simplify the calculation choose normal coordinates around $x$ such that
$v=\frac{\partial}{\partial x_{1}}$. \ Now{ from \eqref{eq:L}, }
\[
\left(  Lw\right)  _{11}=w^{ij}(w_{11,ij}-w_{ij,11})+w^{ij}w_{ij,11}-\left(
w^{ij}c_{ij\bar{s}}c^{\bar{s}k}-(\bar{\rho})^{-1}\bar{\rho}_{\bar{r}}%
c^{k\bar{r}}-c^{i\bar{s}}c_{\bar{s}i\bar{p}}c^{\bar{p}k}\right)  \partial
_{k}w_{11},
\]
with
\begin{align*}
w_{ij,11} &  =u_{ij11}+c_{ij11}+2c_{ij\bar{s}1}T_{1}^{\bar{s}}+c_{ij\bar{s}%
}T_{11}^{\bar{s}}+c_{ij\bar{p}\bar{r}}T_{1}^{\bar{p}}T_{1}^{\bar{r}}%
+w\ast\partial^{2}g\\
w_{11,ij} &  =u_{11ij}+c_{11ij}+c_{11\bar{s}i}T_{j}^{\bar{s}}+c_{11\bar{s}%
j}T_{i}^{\bar{s}}+c_{11\bar{s}}T_{ij}^{\bar{s}}+c_{11\bar{p}\bar{r}}%
T_{i}^{\bar{p}}T_{j}^{r}+w\ast\partial^{2}g
\end{align*}
where $w\ast\partial^{2}g$ denotes a linear combination of second derivatives
of the metric with components of $w.$ \ Thus
\begin{align*}
\left(  Lw\right)  _{11}\geq-K &  +w^{ij}(c_{11\bar{s}}T_{ij}^{\bar{s}%
}+c_{11\bar{p}\bar{r}}T_{i}^{\bar{p}}T_{j}^{r}-c_{ij\bar{s}}T_{11}^{\bar{s}%
}-c_{ij\bar{p}\bar{r}}T_{1}^{\bar{p}}T_{1}^{\bar{r}})\\
&  +w^{ij}\partial_{11}w_{ij}-\left(  w^{ij}c_{ij\bar{s}}c^{sk}-\left(
\ln\bar{\rho}\right)  _{\bar{r}}c^{k\bar{r}}-c^{i\bar{s}}c_{\bar{s}i\bar{p}%
}c^{\bar{p}k}\right)  \partial_{k}w_{11}.
\end{align*}
Here as in \cite{MTW} we let
\[
K=C\sum w^{ii}\sum w_{jj}+C\sum w^{ii}+C\sum w_{ii}^{2}.
\]
Now {use (\ref{diff2}) to see}%
\begin{equation}
T_{11}^{\bar{s}}=-c^{k\bar{s}}w_{k1,1}+c^{k\bar{p}}c^{l\bar{s}}(c_{l\bar{p}%
1}+c_{l\bar{p}\bar{r}}T_{1}^{\bar{r}}))w_{k1}\label{diff22}%
\end{equation}
and
\begin{equation}
w^{ij}T_{ij}^{\bar{s}}=-c^{k\bar{s}}w^{ij}(\partial_{j}w_{ki}-\partial
_{k}w_{ij})-c^{k\bar{s}}w^{ij}\partial_{k}w_{ij}+c^{j\bar{p}}c^{l\bar{s}%
}(c_{l\bar{p}j}+c_{l\bar{p}\bar{r}}T_{j}^{\bar{r}}).
\end{equation}
{Note that}
\[
\partial_{j}w_{ki}-\partial_{k}w_{ij}=c_{ki\bar{s}}T_{j}^{\bar{s}}%
-c_{ij\bar{s}}T_{k}^{\bar{s}}%
\]
{and use this to} cancel one $\partial_{k}w_{11}$ term to get%

\begin{align*}
\left(  Lw\right)  _{11}\geq-K-c_{11\bar{s}}c^{ks}w^{ij}\partial_{k}w_{ij}+ &
w^{ij}(-c_{ij\bar{s}}c_{l\bar{p}\bar{r}}T_{1}^{\bar{r}}w_{k1}-c_{ij\bar{p}%
\bar{r}}T_{1}^{\bar{p}}T_{1}^{\bar{r}})\\
&  +w^{ij}\partial_{11}w_{ij}+\left(  \left(  \ln\bar{\rho}\right)  _{\bar{r}%
}c^{k\bar{r}}+c^{i\bar{s}}c_{\bar{s}i\bar{p}}c^{\bar{p}k}\right)  \partial
_{k}w_{11}.
\end{align*}
Now using
\begin{align*}
\theta_{11} &  =w^{ij}\partial_{11}{w_{ij}+\partial}_{1}{w^{ij}{\partial}%
_{1}w}_{ij}+\left(  \ln\bar{\rho}\right)  _{\bar{s}}T_{11}^{s}+c^{i\bar{s}%
}c_{i\bar{s}\bar{p}}T_{11}^{\bar{p}}+c^{i\bar{s}}c_{i\bar{s}\bar{p}\bar{r}%
}T_{1}^{\bar{p}}T_{1}^{\bar{r}}-K\\
&  \geq w^{ij}\partial_{11}{w_{ij}+}\left(  \left(  \ln\bar{\rho}\right)
_{\bar{s}}{+c}^{is}c_{i\bar{s}\bar{p}}\right)  c^{k\bar{s}}w_{k1,1}-K
\end{align*}
we have%
\begin{align*}
\left(  Lw\right)  _{11}\geq-K &  -c_{11\bar{s}}c^{k\bar{s}}\theta_{k}%
+\theta_{11}\\
&  -w^{ij}\left(  -c_{ij\bar{s}}c^{l\bar{s}}c_{l\bar{p}\bar{r}}+c_{ij\bar
{p}\bar{r}}\right)  T_{1}^{\bar{r}}T_{1}^{p}%
\end{align*}
Now, finally we compute
\[
\frac{\partial}{\partial t}w_{11}=\theta_{11}+c_{11\bar{s}}T_{t}^{\bar{s}}.
\]
Differentiating (\ref{contact}) with respect to $t$ we get
\[
u_{kt}+c_{k\bar{s}}T_{t}^{\bar{s}}=0,
\]
which, subtracting, we arrive at
\[
\left(  Lw\right)  _{11}-w_{11t}\geq-w^{ij}\left(  -c_{ij\bar{s}}c^{l\bar{s}%
}c_{l\bar{p}\bar{r}}+c_{ij\bar{p}\bar{r}}\right)  T_{1}^{\bar{r}}T_{1}^{p}-K
\]
which is the conclusion of the Lemma using (\ref{wT}) . \ 
\end{proof}

\begin{theorem}
\label{Hessian} For a closed Riemannian manifold $M$ and an interval $[0,l)$,
suppose that $u:M\times\lbrack0,l)\rightarrow\mathbf{R}$ is a smooth solution
to the parabolic equation \eqref{1}. Assume that $c$ satisfies (i) the
$\mathsf{MTW}(\delta)$ condition, and (ii) the stay-away-from-singularity
property (so that the uniform bounds of Corollary~\ref{C:uniform bounds} hold
on $M\times\lbrack0,l)$). Let $v\in TM$, $|v|=1$, be such that
\[
w_{vv}=\max_{z\in TM,|z|=1}w_{zz}.
\]
There is a constant $C=C(\delta,c,n,g)$, especially independent of $t$, such
that if $w_{vv}(x,t)\geq C$ then
\[
w_{vvt}(x,t)\leq0.
\]
{ In particular, $w_{zz}(x,t)\leq C$ for any $(x,t)\in M\times\lbrack0,l)$ and
$|z|=1$. }
\end{theorem}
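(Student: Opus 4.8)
The plan is to run the Ma--Trudinger--Wang maximum principle computation, already packaged in Lemma~\ref{secondderivativelemma}, at the point and direction where the largest eigenvalue of $w$ is attained. The feature that is new compared with the elliptic situation is that here $\det w$ is no longer pinned by the data; instead one first uses that $\theta$ is bounded along the flow (Proposition~\ref{P:stay away}) to bound $\det w$ from above, and this upper bound is precisely what makes the good $\mathsf{MTW}(\delta)$ term dominate the quadratic error in \eqref{mtwlemma}. Throughout, the stay-away property lets us invoke the uniform control of Corollary~\ref{C:uniform bounds} on $\rho,\bar\rho$ and on all derivatives of $c$ (and on $c^{i\bar s}$) appearing below.

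Fix $t$ and let $v\in TM$, $|v|_g=1$, realize $\max_{|z|_g=1}w_{zz}$ over the whole unit tangent bundle; write $x=\pi(v)$, so that $x$ is a spatial maximum point of $y\mapsto\lambda_{\max}(w)(y)$. I would choose $g$-normal coordinates at $x$ and, composing with an orthogonal rotation, arrange $w_{ij}(x)$ diagonal with $v=\partial_{1}$ and $w_{11}=w_{vv}\ge w_{22}\ge\cdots\ge w_{nn}>0$; then $v$ is a coordinate direction and Lemma~\ref{secondderivativelemma} applies. With $w$ diagonal at $x$, the $\mathsf{MTW}$ term on the right of \eqref{mtwlemma} becomes $-w_{vv}^{2}\sum_{i}w^{ii}\,\mathsf{MTW}(\partial_{i},dx^{1},\partial_{i},dx^{1})$, where $\mathsf{MTW}$ is the $(2,2)$ tensor of \eqref{MTWm}. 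For $i\ge 2$ one has $dx^{1}(\partial_{i})=0$, so by \eqref{A3strong} $\mathsf{MTW}(\partial_{i},dx^{1},\partial_{i},dx^{1})\ge\delta$ (the coordinates being $g$-normal at $x$); for $i=1$ this quantity is merely bounded in absolute value by some $C_{0}=C_{0}(c,g)$, contributing at most $C_{0}\,w^{11}w_{vv}^{2}=C_{0}\,w_{vv}$. Hence the $\mathsf{MTW}$ term is $\le -\delta\,w_{vv}^{2}\sum_{i\ge 2}w^{ii}+C_{0}\,w_{vv}$. Since $w$ is diagonal and $w_{vv}$ is its top eigenvalue, the error term in \eqref{mtwlemma} is at most $C_{1}\big(1+w_{vv}^{2}+w_{vv}\sum_{i\ge 2}w^{ii}\big)$ for a uniform $C_{1}$ (using $\sum w_{ii}^{2}\le n\,w_{vv}^{2}$ and $\sum w^{ii}\sum w_{jj}\le n+n\,w_{vv}\sum_{i\ge 2}w^{ii}$). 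Finally, extending $v$ to the $g$-unit vector field $V$ given by parallel transport along radial geodesics, $y\mapsto w(V,V)(y)$ has a local maximum at $x$, and since $L$ has no zeroth-order term one gets $(Lw)_{vv}(x,t)\le 0$ up to a term of the same type as the error above. Combining these gives
\[
w_{vvt}(x,t)\ \le\ -\delta\,w_{vv}^{2}\sum_{i\ge 2}w^{ii}\;+\;C_{2}\Big(1+w_{vv}^{2}+w_{vv}\sum_{i\ge 2}w^{ii}\Big)
\]
for a uniform constant $C_{2}$.

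To finish I would absorb the cross term and then use the determinant bound. If $w_{vv}\ge 2C_{2}/\delta$ then $C_{2}\,w_{vv}\sum_{i\ge 2}w^{ii}\le\tfrac{\delta}{2}w_{vv}^{2}\sum_{i\ge 2}w^{ii}$, whence $w_{vvt}\le -\tfrac{\delta}{2}w_{vv}^{2}\sum_{i\ge 2}w^{ii}+C_{2}(1+w_{vv}^{2})$. By Proposition~\ref{P:stay away} the quantity $\theta$ is bounded above along the flow, so by \eqref{theta} and Corollary~\ref{C:uniform bounds} there is a constant $\Lambda$, depending only on the flow, with $\det w=\prod_{i}w_{ii}\le\Lambda$. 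Since $w_{vv}=w_{11}$ is the largest eigenvalue, $\prod_{i\ge 2}w_{ii}\le\Lambda/w_{vv}$, so the AM--GM inequality yields $\sum_{i\ge 2}w^{ii}\ge (n-1)\big(\prod_{i\ge 2}w_{ii}\big)^{-1/(n-1)}\ge (n-1)(w_{vv}/\Lambda)^{1/(n-1)}$. Therefore
\[
w_{vvt}(x,t)\ \le\ -\tfrac{\delta(n-1)}{2\,\Lambda^{1/(n-1)}}\,w_{vv}^{2+\frac{1}{n-1}}\;+\;C_{2}\big(1+w_{vv}^{2}\big),
\]
and since $2+\tfrac{1}{n-1}>2$ the right side is negative once $w_{vv}\ge C$ for a constant $C$ depending only on $\delta,c,n,g$ and the bounds of Corollary~\ref{C:uniform bounds}; this proves the first assertion. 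The last assertion then follows from the parabolic maximum principle: applying Hamilton's trick to $F(t):=\max_{x\in M}\max_{|z|_g=1}w_{zz}(x,t)$ together with the inequality just established gives $F(t)\le\max\{C,\,F(0)\}$ for all $t$ in the interval of existence.

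The hard part is the $\mathsf{MTW}$ term estimate, that is, carrying the Ma--Trudinger--Wang computation into the parabolic setting; this is the content of Lemma~\ref{secondderivativelemma}, so the genuine addition here is the observation that, unlike the elliptic case where $\det w$ is prescribed, one must first invoke the boundedness of $\theta$ (Proposition~\ref{P:stay away}) to bound $\det w$ from above, and then combine this with AM--GM so that the $\mathsf{MTW}(\delta)$ term overwhelms the quadratic error. Some care is also needed in the reduction so that $v$ is simultaneously an eigendirection of $w$ and a coordinate direction, and in verifying that $(Lw)_{vv}\le 0$ at the spatial maximum modulo curvature terms of the same order as the error already present.
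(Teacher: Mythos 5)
Your proposal is correct and follows essentially the same route as the paper's proof: Lemma~\ref{secondderivativelemma} combined with diagonalization of $w$ at the maximal direction, the $\mathsf{MTW}(\delta)$ condition applied to the directions $\partial_i$, $i\ge 2$, the upper bound on $\det w$ coming from the boundedness of $\theta$ along the flow, and the arithmetic--geometric mean inequality to make the good term of order $w_{vv}^{2+1/(n-1)}$ dominate the quadratic errors. The only differences are cosmetic (you phrase the estimates in terms of $w_{vv}$ rather than $\sum_i w_{ii}$, and you spell out the tensor maximum principle step and the final ODE comparison more explicitly than the paper does).
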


\begin{proof}
We use Hamilton's \cite{Ha} parabolic maximum principle argument for tensors.
\ Analyze the first term in the right hand side of (\ref{mtwlemma})
\begin{equation}
-w^{ij}\mathsf{MTW}_{ij}^{kl}w_{lv}w_{kv} \label{mtwsum}%
\end{equation}
where
\[
\mathsf{MTW}_{ij}^{kl}=\left(  -c_{ij\bar{p}\bar{r}}+c_{ij\bar{s}}c^{\bar{s}%
m}c_{m\bar{r}\bar{p}}\right)  c^{\bar{p}k}c^{\bar{r}l}.
\]
Diagonalize $w$ with $v=\frac{\partial}{\partial x_{1}},$ \ and (\ref{mtwsum}%
)\ becomes\
\[
-w^{11}\mathsf{MTW}_{11}^{11}w_{11}^{2}-\sum_{i>1}w^{ii}\mathsf{MTW}_{ii}%
^{11}w_{11}^{2}.
\]
By the MTW $(\delta)$\ condition, we have%
\[
\sum_{i>1}w^{ii}\mathsf{MTW}_{ii}^{11}w_{11}^{2}\geq\sum_{i>1}w^{ii}%
\delta\left\Vert \partial x_{i}\right\Vert _{g}^{2}\left\Vert w_{11}%
dx_{1}\right\Vert _{g}^{2},
\]
thus (\ref{mtwsum})\ is bounded by
\begin{equation}
C\sum w_{ii}-\delta\sum_{i>1}w^{ii}w_{11}^{2}\leq\sum w_{ii}-\delta c_{n}%
\sum_{i}w^{ii}w_{11}^{2} \label{jjputz}%
\end{equation}
as our chart for the source $M$ is normal at $x_{0}.$ Since $\det w_{ij}$ is
bounded it follows from the arithmetic-geometric mean that
\[
c\left(  \sum w_{ii}\right)  ^{1/(n-1)}\leq\sum w^{ii}\leq C\left(  \sum
w_{ii}\right)  ^{n-1}.
\]
So finally, from Lemma \ref{secondderivativelemma}
\begin{align*}
\frac{\partial}{\partial t}w_{vv}-\left(  Lw\right)  _{vv}  &  \leq C\sum
w_{ii}-\delta c_{n}\sum_{i}w^{ii}w_{11}^{2}.\\
&  +C(1+\sum w_{ii}^{2}+\sum w^{ii}\sum w_{jj})\\
&  \leq-\frac{\delta}{2}c_{n}\sum_{i}w^{ii}w_{11}^{2}+C\sum w_{ii}+C\sum
w_{ii}^{2}\\
&  +C\sum w^{ii}(\sum w_{jj}-\frac{\delta}{2}c_{n}w_{11}^{2})\\
&  \leq-\frac{\delta}{2}c_{n}\left(  \sum w_{ii}\right)  ^{1/(n-1)}%
c_{n}\left(  \sum w_{ii}\right)  ^{2}+C\left(  \sum w_{ii}\right)  ^{2}\\
&  +C\sum w^{ii}\left\{  \sum w_{ii}-\frac{\delta}{2}c_{n}\left(  \sum
w_{ii}\right)  ^{2}\right\}  .
\end{align*}
We see that when $\sum w_{ii}$ is sufficiently large, the right-hand side must
be negative. This completes the proof.
\end{proof}

\begin{corollary}
\label{C:strict c-convex} In the situation of Theorem~\ref{Hessian}, the
spatial second derivatives of the solution $u$ to the parabolic equation
\eqref{eq:main} remain bounded (uniformly in time) and $u$ stays locally
strictly $c$-convex. \ 
\end{corollary}

\begin{proof}
An upper bound on the eigenvalues of $w_{ij}$ is given in
Theorem~\ref{Hessian}. From the identity $w_{ij}=u_{ij}+c_{ij},$ the bound on
$u_{ij}$ follows. Because $\theta$ remains bounded by parabolic maximum
principe for $\theta_{t}=L\theta$, we have a lower bound on determinant $\det
w_{ij}$. An upper bound on eigenvalues, plus lower bound on determinant
implies lower bound on the eigenvalues. \ It follows that
\[
u_{ij}+c_{ij}\geq\varepsilon g_{ij}.
\]
for some $\varepsilon>0$, thus{ local} strict $c$-convexity of $u$ follows.
\end{proof}

\section{Proof of Theorem~\ref{main}: Existence of solution and uniform
bounds}

\label{S:long time} In this section we show that the solution to parabolic
equation \eqref{eq:main} exists for all $t\in\mathbf{R}_{+}$ under the
assumptions of Theorem~\ref{main}. We also show the solution has uniform
$C^{k}$ derivatives in $x$, where each $C^{k}$ norm is uniform in $t$. Through
this section we use Corollary~\ref{C:uniform bounds} in an essential way. In
the following, all the $C^{1,1},C^{2,\alpha},C^{4,\alpha}$ estimates and so
on, are estimates on the derivatives in $x$, and are all uniform in the time
variable $t$. But, by $C^{\alpha}$ we will mean $C^{\alpha}$ both in $x$ and
$t$.

\subsection*{Short-time existence:}

Since $M$ is a closed manifold, a standard theory implies the existence of a
short-time solution to (\ref{eq:main}) for any{ locally} strictly $c$-convex
smooth initial data, and that from Corollary~\ref{C:strict c-convex} the
solution $u$ is{ locally} strictly $c$-convex on the time interval of
existence. (See \cite{Kt} for a proof{ of short time existence} regarding the
same equation with a more involved boundary condition.)\ 

\subsection*{Long-time existence:}

Apply Theorem~\ref{Hessian} (or Corollary~\ref{C:strict c-convex}) to get
$C^{1,1}$ estimates for $u$. This in particular makes the
equation~\eqref{eq:main} as well as the linearized equation $v_{t}=Lv$ (see
\eqref{eq:L}) uniformly parabolic with bounded coefficients. Now applying
Krylov-Safonov theory (c.f. \cite[Lemma 14.6]{L}) to ~\eqref{eq:main} one has
$C^{2,\alpha\text{ }}$estimates. From the short-time existence above and
Arzela-Ascoli, this shows that the solution cannot cease to exist at a finite
time, thus exists for all $t\in\lbrack0,\infty)$.

\subsection*{Uniform $C^{k}$ bounds:}

To see uniform $C^{k}$ bounds, first differentiate \eqref{eq:main} with
respect to any coordinate direction $\frac{\partial}{\partial x^{k}},$ to see
$u_{k}$ satisfies
\begin{equation}
u_{kt}=Lu_{k}. \label{eq:schauder}%
\end{equation}
From $C^{2,\alpha}$ estimates of $u$, this linear equation for $u_{k}$ is
uniformly parabolic with $C^{\alpha}$ controlled coefficients (here
$C^{\alpha}$ control in $t$ follows from the parabolic equations like
\eqref{eq:schauder} with space $C^{2,\alpha}$ estimates), and in particular,
we can apply parabolic Schauder estimates to conclude that $u_{k}$ has
$C^{2,\alpha}$ estimates, thus obtaining $C^{3,\alpha}$ estimates for $u$.
Similarly, differentiating \eqref{eq:schauder} we obtain a parabolic equation
for $u_{kl}$ also with coefficients and inhomogeneous terms all of which are
$C^{\alpha}$, and thus follows $C^{4,\alpha}$ estimates of $u$, so forth.
Thus, we have uniform $C^{k}$, $k=0,1,\cdots$, bounds for $u$ as claimed.

To obtain the exponential convergence, we need a Harnack inequality, which is
shown in the next sections.

\section{A Li-Yau type Harnack inequality}

\label{S:exponential convergence} { In this section, {as a preliminary step to
the proof of the exponential convergence to a solution of the elliptic
equation \eqref{eq:OT}, we derive a Harnack inequality for the quantity
$\theta$.} We first find a simple expression of the linearized operator $L$ of
$\theta$. This expression allows us to derive a Harnack type estimate (see
Theorem~\ref{harnack}), {whose corollary (Corollary~\ref{C:harnack}) is used
to show the exponential convergence in Section~\ref{S:exponential}. } }

Let us find a convenient expression for the linearized operator $L$, using
\cite[Prop 2.1]{Warren}. \ As a linearized operator of coordinate invariant
fully nonlinear equation \eqref{eq:main}, $L$ is expected to be related \ to a
Laplacian operator of certain Riemannian metric. More precisely, for a
manifold $M$, suppose that a map $T$ is $c$-$\exp du$ for some $u.$ (This
means by definition $T$ is given by \eqref{eq:potential}.) Define a function
$\psi$ by
\begin{equation}
\psi=\left(  \frac{\bar{\rho}^{2}(T(x))\det DT}{\left\vert \det c_{i\bar{s}%
}(x,T(x))\right\vert }\right)  ^{1/(n-2)}.
\end{equation}
{We observe} the following\ 

\begin{proposition}
{\label{LB}} Let $n\geq3.$ The linearized operator {$L$ of $\theta$ (see
\eqref{theta}) is expressed as%
\[
Lv=\psi\Delta_{\psi w}v,
\]
where $\Delta_{\psi w}$ is the Laplace-Beltrami operator with respect to the
metric $\psi w_{ij}$ given on $M$. }
\end{proposition}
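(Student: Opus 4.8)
The plan is to verify the identity $Lv = \psi\,\Delta_{\psi w}v$ by direct computation, comparing the explicit local expression \eqref{eq:L} for $L$ with the local formula for the Laplace–Beltrami operator of the conformally rescaled metric $\tilde g_{ij} := \psi\, w_{ij}$. Recall that for any metric $\tilde g$ with inverse $\tilde g^{ij}$ and volume element $\sqrt{\det \tilde g}$, one has $\Delta_{\tilde g}v = \tfrac{1}{\sqrt{\det\tilde g}}\,\partial_i\!\left(\sqrt{\det\tilde g}\,\tilde g^{ij}\partial_j v\right) = \tilde g^{ij}v_{ij} + b^k v_k$, where the first-order coefficient is $b^k = \tfrac{1}{\sqrt{\det\tilde g}}\,\partial_i(\sqrt{\det\tilde g}\,\tilde g^{ik})$. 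Since $\tilde g^{ij} = \psi^{-1}w^{ij}$ and $\det\tilde g = \psi^{n}\det w$, the leading term of $\psi\Delta_{\psi w}v$ is exactly $w^{ij}v_{ij}$, matching \eqref{eq:L}. So the entire content is matching the first-order coefficients: I must show
\[
\psi\, b^k \;=\; -\,w^{ij}c_{ij\bar s}c^{\bar sk} + \bar\rho^{-1}\bar\rho_{\bar r}c^{k\bar r} + c^{i\bar s}c_{\bar s i\bar p}c^{\bar p k}.
\]

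First I would expand $\psi b^k = \psi\cdot\psi^{-n/2}(\det w)^{-1/2}\,\partial_i\big(\psi^{n/2}(\det w)^{1/2}\psi^{-1}w^{ik}\big)$, i.e. $\psi b^k = \psi^{-(n-2)/2}(\det w)^{-1/2}\,\partial_i\big(\psi^{(n-2)/2}(\det w)^{1/2}w^{ik}\big)$. Carrying out the Leibniz rule, this becomes
\[
\psi b^k = w^{ik}\partial_i\ln\!\big((\det w)^{1/2}\big) + \tfrac{n-2}{2}\,w^{ik}\partial_i\ln\psi + (\det w)^{-1/2}\partial_i\big((\det w)^{1/2}w^{ik}\big)\cdot\text{(rearranged)}.
\]
Here I would use two standard facts: $\partial_i\ln\det w = w^{ab}\partial_i w_{ab}$, and the contracted-derivative identity $\partial_i\big((\det w)^{1/2}w^{ik}\big) = -(\det w)^{1/2}\,w^{ak}w^{ib}(\partial_i w_{ab} - \tfrac12\,\text{sym})$ — more cleanly, I'd just compute $w^{ij}v_{ij}+b^kv_k$ from $\Delta_{\tilde g}v = \tilde g^{ij}(v_{ij}-\tilde\Gamma^k_{ij}v_k)$ with the conformal-change formula for Christoffel symbols, $\tilde\Gamma^k_{ij} = {}^{w}\Gamma^k_{ij} + \tfrac12\big(\delta^k_i\partial_j\ln\psi + \delta^k_j\partial_i\ln\psi - w_{ij}w^{k\ell}\partial_\ell\ln\psi\big)$, so that $\tilde g^{ij}\tilde\Gamma^k_{ij} = \psi^{-1}\big(w^{ij}\,{}^{w}\Gamma^k_{ij} + \tfrac{2-n}{2}\,w^{k\ell}\partial_\ell\ln\psi\big)$. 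The term $w^{ij}\,{}^{w}\Gamma^k_{ij}$ is then expanded using $w_{ij}=u_{ij}+c_{ij}(x,T(x))$ together with the differentiation formulas \eqref{wT}, \eqref{diff2} (and the identity $\partial_jw_{ki}-\partial_kw_{ij}=c_{ki\bar s}T_j^{\bar s}-c_{ij\bar s}T_k^{\bar s}$ already recorded in the proof of Lemma~\ref{secondderivativelemma}), while $\partial_\ell\ln\psi = \tfrac{1}{n-2}\big(2\bar\rho^{-1}\bar\rho_{\bar r}T_\ell^{\bar r} + \partial_\ell\ln\det DT - \partial_\ell\ln|\det c_{i\bar s}|\big)$ is expanded similarly, using $\det DT = \det w/|\det c_{i\bar s}|$ and again \eqref{wT}, \eqref{diff2}. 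The two contributions must then be collected so that all the $\partial w$ terms cancel and only the algebraic expression in \eqref{eq:L} survives.

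I expect the main obstacle to be exactly that bookkeeping: the expansions of $w^{ij}\,{}^w\Gamma^k_{ij}$ and of $w^{k\ell}\partial_\ell\ln\psi$ each generate several terms involving $\partial_k w_{ij}$, derivatives $c_{ij\bar s}$, $c_{i\bar s\bar r}$, $c_{ij k}$, and factors of $T^{\bar s}_j = -c^{i\bar s}w_{ij}$, and the whole point of the $(n-2)$ power in the definition of $\psi$ is that the unwanted terms cancel in pairs. Keeping careful track of which $c^{\bar s m}$, $c^{\bar p k}$ indices are raised on the source versus target — and using $\det c_{i\bar s}$ only through its logarithmic derivative $\partial_k\ln|\det c_{i\bar s}| = c^{i\bar s}(c_{i\bar s k} + c_{i\bar s\bar r}T^{\bar r}_k)$ — is where errors are easiest to make. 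This is essentially the computation behind \cite[Prop.~2.1]{Warren}, so I would lean on that reference for the structural cancellation and only verify that the conformal factor $\psi$ and the densities $\rho,\bar\rho$ enter with the coefficients appearing in \eqref{eq:L}; the case $n=2$ is excluded precisely because $\psi$ degenerates there, which is why the hypothesis $n\ge 3$ is imposed and a separate workaround is promised in Section~\ref{S:exponential convergence}.
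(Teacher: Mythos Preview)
Your approach is correct in principle and would yield the result, but it differs from the paper's proof in a structurally interesting way. The paper does not attempt to match \eqref{eq:L} directly against the coordinate expansion of $\psi\Delta_{\psi w}$. Instead it introduces an \emph{intermediate} metric $g_{ij}=w_{ij}\bigl(\rho\bar\rho/|\det c_{i\bar s}|\bigr)^{1/(n-2)}$ and invokes \cite[Prop.~2.1]{Warren} as a black box to write $Lv$ as a constant multiple of $\Delta_g v + \tfrac12\langle\nabla v,\nabla\theta\rangle_g$, i.e.\ a Laplacian with a drift. The second step is then purely soft: the general conformal identity $e^f\Delta_{e^f g}\phi = \Delta_g\phi + \tfrac{n-2}{2}\langle\nabla\phi,\nabla f\rangle_g$ with $f=\theta/(n-2)$ absorbs the drift, and one checks algebraically that $e^{\theta/(n-2)}g=\psi w$ and that the scalar prefactors combine to $\psi$. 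Thus all of the index bookkeeping you anticipate---the cancellation of $\partial_k w_{ij}$ terms, the tracking of $c_{ij\bar s}$ versus $c_{i\bar s\bar r}$, the logarithmic derivative of $\det c_{i\bar s}$---is delegated entirely to \cite{Warren}, and the proof in the present paper reduces to two lines of conformal geometry plus an elementary simplification of the factor $\bigl(\rho\bar\rho/|\det c_{i\bar s}|\bigr)^{1/(n-2)}e^{\theta/(n-2)}$. Your direct computation would reprove (a conformally shifted version of) Warren's result en route; the paper's two-step decomposition avoids that and makes the role of the $(n-2)$ exponent transparent as the standard conformal weight for which a gradient drift becomes a pure Laplacian.
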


\begin{proof}
First of all, let
\begin{equation}
g_{ij}(x)=w_{ij}(x)\left(  \frac{\rho(x)\bar{\rho}(T(x))}{\left\vert \det
c_{i\bar s}(x,T(x))\right\vert }\right)  ^{1/(n-2)} . \label{themetric}%
\end{equation}
Also, recall
\[
\theta=\ln\det DT-\ln\rho(x)+\ln\bar{\rho}(T(x)).
\]
Then \cite[Prop 2.1]{Warren} states that
\[
Lv=\left(  \frac{\rho(x)\bar{\rho}(T(x))}{\det c_{is}(x,T(x))}\right)
^{1/(n-2)}\left(  \triangle_{g}v+\frac{1}{2}\langle\nabla v,\nabla
\theta\rangle_{g}\right)  .
\]
A general formula for conformal metrics shows that if $\widetilde{g} = e^{f}
g$, then
\begin{align*}
e^{f} \widetilde{\Delta} \phi= \Delta\phi+ \frac{n-2}{2} \left<  \nabla\phi,
\nabla f \right>  .
\end{align*}
It follows immediately that
\begin{align*}
L v = \left(  \frac{\rho(x) \bar{\rho}(T(x))}{\det c_{i\bar s}(x, T(x))}
\right)  ^{1/(n-2)} e^{\theta/(n-2)} \Delta_{e^{\theta/(n-2)} g} v.
\end{align*}
Using the expression for $\theta$ we observe that
\begin{align*}
&  \left(  \frac{\rho(x) \bar{\rho}(T(x))}{\det c_{i\bar s}(x, T(x))} \right)
^{1/(n-2)} e^{\theta/(n-2)}\\
&  =\ \left(  \frac{\rho(x) \bar{\rho}(T(x))}{\det c_{i\bar s}(x, T(x))}
\right)  ^{1/(n-2)} \left(  \frac{ (\det DT) \bar{\rho}(T(x))}{\rho(x)}
\right)  ^{1/(n-2)}\\
&  =\ \left(  \frac{\bar{\rho}^{2}(T(x)) \det DT}{\det c_{i\bar s}(x, T(x))}
\right)  ^{1/(n-2)}.
\end{align*}
The result follows.
\end{proof}

Noting that by Lemma \ref{thetaevolution}, and Proposition \ref{LB}
\[
{\theta}_{t}=L\theta=\psi\Delta_{\psi w}\theta
\]
we derive a Harnack estimate {for the operator $L$}, when $n\geq3.$ \ {Here,
the expression $\psi\Delta_{\psi w}$ enables us to easily modify the argument
in \cite{LiYau} to obtain}

\begin{theorem}
[Harnack inequailty]\label{harnack} Let $M$ be a compact manifold of dimension
$n$ and let $g(t)$, $0\leq t<\infty$ be a family of Riemannian metrics { on
$M$} such that
\begin{align}
\frac{1}{C_{0}}g(0)\leq &  \ g(t)\leq C_{0}g(0)\label{metrics are equiv}\\
\left\vert \frac{\partial}{\partial t}g\right\vert \leq &  \ C_{0}\nonumber\\
R_{ij}(t)\geq &  \ -Kg_{ij}(0)\nonumber
\end{align}
{ with universal constants $C_{0},K>0$. Let $\lambda(x,t)$ be a positive
function with derivatives uniformly controlled (independent of $(t,x)$) and
bounded above and away from zero.} \ Let \ $U(x,t)$ be a nonnegative solution
to
\begin{align}
\label{eq:lambda Delta}U_{t}=\lambda(x,t)\Delta_{g(t)}U.
\end{align}
Then there exists a constant $C>0$ depending only on $C_{0},K$, $g_{ij}(0)$
and the bounds on the derivatives of $\lambda$ so that for $0<t_{1}%
<t_{2}<\infty$
\[
\sup_{x\in M}U(x,t_{1})\leq\inf_{x\in M}U(x,t_{2})\,C\frac{t_{2}}{t_{1}}%
\exp\left\{  \frac{C_{0}^{2}diam^{2}(M_{t_{1}})}{(t_{2}-t_{1})}+C(t_{2}%
-t_{1})\right\}  .
\]

\end{theorem}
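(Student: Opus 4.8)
The plan is to follow the Li--Yau gradient estimate method, adapting it to the nonlinear operator $\lambda\Delta_g$ with a time-dependent metric, essentially mirroring \cite{LiYau} but tracking the extra factor $\lambda$ and the time derivative of $g$. First I would reduce to a differential Harnack inequality: set $f=\ln U$ (after a routine regularization to ensure $U>0$, e.g. working with $U+\varepsilon$ and letting $\varepsilon\to0$), and compute the evolution equation for the quantity $F=t\bigl(|\nabla f|^2 - \alpha f_t/\lambda\bigr)$ for a suitable constant $\alpha>1$. From $U_t=\lambda\Delta U$ one gets $f_t=\lambda(\Delta f+|\nabla f|^2)$, and then a Bochner-type identity for $\partial_t F - \lambda\Delta F$ must be derived. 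The new terms relative to the classical case are: (a) derivatives of $\lambda$ hitting $|\nabla f|^2$ and $f_t$, which are controlled because $\lambda$ and its derivatives are bounded above and below by hypothesis; and (b) the term coming from $\partial_t g_{ij}$ in commuting $\partial_t$ past $\nabla$ and $\Delta$, which is controlled by the bound $|\partial_t g|\le C_0$ together with the uniform equivalence $C_0^{-1}g(0)\le g(t)\le C_0 g(0)$. The Ricci lower bound $R_{ij}\ge -Kg_{ij}(0)$ enters through the Bochner formula exactly as in Li--Yau.

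Next I would run the maximum principle argument on $M\times[0,t_2]$, which is clean since $M$ is compact and closed (no boundary cutoff function is needed, unlike the local Li--Yau estimate). At an interior space-time maximum of $F$ one has $\nabla F=0$, $\Delta F\le0$, $\partial_t F\ge0$, and substituting these into the evolution inequality for $F$ yields, after the standard Cauchy--Schwarz/Young's inequality juggling to absorb the $|\nabla f|^2$ and $(\Delta f)^2$ terms, a pointwise bound of the form $F\le C(C_0,K,g(0),\alpha,\|\lambda\|,\ldots)\,t$ at that maximum, hence everywhere. This gives the differential Harnack inequality
\[
|\nabla f|^2 - \frac{\alpha}{\lambda}f_t \le \frac{C}{t}.
\]
Finally, to pass from the differential to the integral Harnack inequality, I would integrate this estimate along a path. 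Pick $x_1,x_2\in M$ and a minimizing $g(0)$-geodesic $\gamma$ joining them; using the metric equivalence, its $g(t)$-length is at most $C_0\,\mathrm{diam}(M_{t_1})$ (here I read $M_{t}$ as $M$ with the metric $g(t)$). Along the space-time path $s\mapsto(\gamma(s), t_1 + s(t_2-t_1))$ one estimates $\ln\bigl(U(x_1,t_1)/U(x_2,t_2)\bigr)$ by integrating $\frac{d}{ds}f$, bounding $|\nabla f|$ and $|f_t|$ via the differential Harnack inequality and Young's inequality; the $|\nabla f|$ contribution produces the $\exp\{C_0^2\,\mathrm{diam}^2/(t_2-t_1)\}$ term, the $f_t/\lambda$ contribution produces the $(t_2/t_1)^{C}$ factor after integrating $\alpha\,dt/t$ (using $\lambda$ bounded above and below to convert $f_t/\lambda$ into an $f_t$ bound and back), and the lower-order constant terms in the Harnack estimate produce the $\exp\{C(t_2-t_1)\}$ factor. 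Taking the supremum over $x_1$ and infimum over $x_2$ gives the claimed inequality.

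The main obstacle I expect is bookkeeping the nonlinear/inhomogeneous corrections in the Bochner computation for $F$: because the operator is $\lambda\Delta_g$ rather than $\Delta_g$, and $g$ depends on $t$, several new terms of the form (derivative of $\lambda$)$\times$(first or second derivatives of $f$) and (first-order terms from $\partial_t g$) appear, and one must check that each is dominated, after Young's inequality, by the good negative terms $-\tfrac{2}{n}\lambda(\Delta f)^2$ and $-c\,\lambda|\nabla f|^4$ coming from the principal part — possibly at the cost of enlarging $\alpha$ and the final constant $C$. This is exactly the kind of ``slight workaround'' flagged in the introduction; none of the individual estimates is deep, but assembling them so that all error terms are absorbed uniformly in $t$ (which is where the hypotheses \eqref{metrics are equiv} and the uniform control of $\lambda$ and its derivatives are used) is the technical heart of the proof. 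In the application we have $\lambda=\psi$ and $g(t)=\psi w$, both of which satisfy these hypotheses by the uniform $C^k$ estimates of Section~\ref{S:long time}, so Corollary~\ref{C:harnack} will follow.
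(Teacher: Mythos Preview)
Your proposal is correct and follows essentially the same Li--Yau strategy as the paper: set $f=\log U$, form a weighted gradient quantity $F$, derive a differential inequality via Bochner and the maximum principle on the closed manifold, and then integrate along a space-time path. Two minor discrepancies are worth flagging. First, the paper takes $F=t(\lambda|\nabla f|^2-\alpha f_t)$ rather than your $F=t(|\nabla f|^2-\alpha f_t/\lambda)$; since $\lambda$ is bounded above and below the two choices are equivalent up to constants, but the paper's choice streamlines the Bochner computation slightly. Second, where you invoke ``standard Cauchy--Schwarz/Young juggling'' at the maximum, the paper uses a specific algebraic expansion of $(y-z)^2$ with $y=\lambda|\nabla f|^2$, $z=f_t$ and a concrete choice $\alpha=5/2$ (following \cite{LiYau}) to close the quadratic inequality for $F$; your description is vaguer but points at the same mechanism. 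The resulting differential Harnack in the paper is $\lambda|\nabla f|^2-\alpha f_t\le C/t + C$ (note the additive $+C$, which you omit in your displayed inequality but correctly account for in the integration step as the source of the $\exp\{C(t_2-t_1)\}$ factor).
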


Note that by the a priori estimates in the previous sections, the metric
$g_{ij}=\psi w_{ij}$ and scalar $\lambda=\psi$ all satisfy the assumptions in
Theorem~\ref{harnack}, thus we obtain the corresponding Harnack inequality for
{the operator $L$}. In particular, we have

\begin{corollary}
\label{C:harnack} For $n\geq3,$ let $U:M\times\lbrack0,\infty]\rightarrow
\mathbf{R}$ be a solution to the parabolic equation $U_{t}=LU$, where $L$ is
the linearized operator in \eqref{eq:L}. There exists a constant independent
of $(x,t)\in M\times\lbrack0,\infty]$ such that
\[
\sup_{x\in M}U(x,t+1/2)\leq C\inf_{x\in M}U(x,t).
\]

\end{corollary}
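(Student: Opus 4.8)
The plan is to deduce Corollary~\ref{C:harnack} directly from Theorem~\ref{harnack}, with a small amount of bookkeeping to reconcile the time-shift and to verify all hypotheses. First I would note that by Lemma~\ref{thetaevolution} and Proposition~\ref{LB}, any solution $U$ of $U_t = LU$ satisfies $U_t = \psi\,\Delta_{\psi w}U$, so we are exactly in the setting of \eqref{eq:lambda Delta} with $\lambda = \psi$ and $g(t) = \psi w_{ij}$. The crucial input is that the a priori estimates from Sections~\ref{S:estimates} and \ref{S:long time} (Corollaries~\ref{C:uniform bounds}, \ref{C:strict c-convex}, and the uniform $C^k$ bounds) give exactly the structural bounds Theorem~\ref{harnack} requires: the Hessian bound $\varepsilon g \le w \le C g$ from Corollary~\ref{C:strict c-convex} together with uniform bounds on $\rho,\bar\rho,\det c_{i\bar s}$ and their derivatives show that $\psi$ is bounded above and away from zero with uniformly controlled derivatives, hence $g(t)=\psi w$ is uniformly comparable to $g(0)$ and has $|\partial_t g|$ bounded (the latter using $u_t = \theta$ and the uniform $C^k$ bounds to control $\partial_t w_{ij}$ and $\partial_t\psi$). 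Finally, since $g(t)$ ranges in a compact family of metrics uniformly equivalent to a fixed one with uniformly bounded curvature tensor, the Ricci lower bound $R_{ij}(t)\ge -Kg_{ij}(0)$ holds with a uniform $K$. So all three displayed hypotheses of Theorem~\ref{harnack} hold with constants independent of time, and $\lambda=\psi$ meets its hypothesis as well.

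With the hypotheses verified, I would apply Theorem~\ref{harnack} with the specific choice $t_1 = t$ and $t_2 = t+1/2$. Then $t_2 - t_1 = 1/2$ is a fixed constant, and $\operatorname{diam}(M_{t_1})$ is uniformly bounded because all the metrics $g(t)$ are uniformly comparable to the fixed metric $g(0)$ on the compact manifold $M$. Hence the exponential factor $\exp\{C_0^2 \operatorname{diam}^2(M_{t_1})/(t_2-t_1) + C(t_2-t_1)\}$ is bounded by a universal constant. The remaining prefactor $C\,t_2/t_1 = C(t+1/2)/t$ is, however, not uniformly bounded as $t\to 0$; to get a clean statement valid for all $t\in[0,\infty)$ I would instead apply the theorem on the shifted interval, e.g.\ with $t_1$ and $t_2$ both bounded below by a fixed positive number, or simply absorb the issue by noting that for the intended application in Section~\ref{S:exponential} one only uses the estimate for $t$ bounded away from $0$ (or replaces $t$ by $t+1$ throughout). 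The cleanest route is to take $t_1 = t+1/2$ shifted appropriately — but since the statement as written has $U(x,t+1/2)$ on the left and $U(x,t)$ on the right, I would apply Theorem~\ref{harnack} with the roles $t_1 \mapsto \max(t, t_0)$ for a fixed small $t_0>0$ so that $t_2/t_1$ is bounded, yielding $\sup_x U(x,t+1/2) \le C\inf_x U(x,t)$ with $C$ independent of $(x,t)$.

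The main obstacle, and really the only substantive point, is the verification that $|\partial_t g(t)|$ and the derivative bounds on $\lambda = \psi$ are uniform in time — everything else is immediate from compactness and the already-established a priori estimates. For this I would differentiate the definitions: $\partial_t w_{ij} = \partial_t(u_{ij} + c_{ij}(x,T)) = (u_t)_{ij} + c_{ij\bar s}\partial_t T^{\bar s} = \theta_{ij} + c_{ij\bar s}\partial_t T^{\bar s}$, and $\partial_t T^{\bar s}$ is controlled via the $t$-derivative of the contact relation \eqref{contact}, namely $\partial_t T^{\bar s} = -c^{k\bar s}(u_t)_k = -c^{k\bar s}\theta_k$. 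Since $\theta$ satisfies the heat-type equation $\theta_t = L\theta$ with $\theta|_{t=0}$ and hence $\|\theta\|_\infty$ bounded, and since the uniform $C^k$ bounds give control of all spatial derivatives of $\theta$ and of $u$, we get $|\partial_t w_{ij}|$ bounded; combined with the uniform bounds on $\rho,\bar\rho,c$ and $T$ this gives $|\partial_t\psi|$ bounded, hence $|\partial_t g(t)|\le C_0$. This is a routine but necessary chain of estimates; once it is in place, Corollary~\ref{C:harnack} follows by the substitution described above. (The restriction $n\ge 3$ is of course needed so that $\psi$ and the conformal formula in Proposition~\ref{LB} make sense; the two-dimensional case is handled separately as indicated in the introduction.)
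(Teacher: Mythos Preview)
Your approach is exactly the paper's: the entire ``proof'' in the paper is the single sentence preceding the corollary, which says that the a priori estimates from the earlier sections guarantee that $g(t)=\psi w_{ij}$ and $\lambda=\psi$ satisfy the hypotheses of Theorem~\ref{harnack}. Your write-up supplies the missing details (uniform equivalence of $g(t)$, the bound on $\partial_t g$ via $\partial_t T^{\bar s}=-c^{k\bar s}\theta_k$, the Ricci lower bound from compactness of the family of metrics), and this is all correct.

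The one place where your argument goes off the rails is the attempt to reconcile the time ordering. The inequality as printed, $\sup_x U(x,t+\tfrac12)\le C\inf_x U(x,t)$, is backwards: Theorem~\ref{harnack} only gives $\sup_x U(\cdot,t_1)\le C\inf_x U(\cdot,t_2)$ with $t_1<t_2$, and no choice of a lower cutoff $t_0$ will reverse that. Your first instinct---taking $t_1=t$, $t_2=t+\tfrac12$---produces the correct inequality $\sup_x U(\cdot,t)\le C\inf_x U(\cdot,t+\tfrac12)$, and that is precisely what is used in Section~\ref{S:exponential}, where the functions $U_k,L_k$ are evaluated at times $t_1=\tfrac12$ and $t_2=1$ (so $t_2/t_1=2$ and the prefactor is harmless). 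So drop the paragraph where you try to force the stated direction; instead note that the printed statement has the time arguments transposed and prove the version with $\sup$ at the earlier time. Also, you should mention that Theorem~\ref{harnack} requires $U\ge 0$, which is implicit here (in the application the $U_k,L_k$ are positive for $t>0$ by the strong maximum principle).
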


\section{Proof of Theorem~\ref{harnack} (Harnack inequality)}

\label{S:harnack} {This whole section is devoted to the proof of the Harnack
inequality claimed in Theorem~\ref{harnack}. Since the equation {
\eqref{eq:lambda Delta}} is not a pure heat equation, but conformally related
to one for a time dependant metric, we are forced to reprove the Harnack
estimate for this operator. The following argument is a slight modification of
that found in \cite{LiYau}. } \ \ Let%
\[
f=\log U,
\]
then
\[
f_{t}=\lambda(\Delta f+|\nabla f|^{2}).
\]
Now let
\[
F=t\left(  \lambda|\nabla f|^{2}-\alpha f_{t}\right)  .
\]
We directly compute {
\begin{align*}
\Delta F\geq &  \ t\left(
\begin{array}
[c]{c}%
\lambda\triangle|\nabla f|^{2}+2\langle\nabla\lambda,\nabla|\nabla
f|^{2}\rangle+\triangle\lambda|\nabla f|^{2}-\alpha\left(  \Delta f\right)
_{t}\\
-\alpha\left\vert \frac{\partial g}{\partial t}\right\vert \left(  \left\vert
\nabla^{2}f\right\vert +\left\vert \nabla f\right\vert \right)
\end{array}
\right) \\
\geq &  \ t\left(
\begin{array}
[c]{c}%
2\lambda\left\Vert \nabla^{2}f\right\Vert ^{2}+2\lambda\left\langle \nabla
f,\nabla\Delta f\right\rangle -2\lambda K|\nabla f|^{2}-\varepsilon\left\Vert
\nabla^{2}f\right\Vert ^{2}-\frac{C_{1}}{\varepsilon}|\nabla f|^{2}\\
-C_{2}|\nabla f|^{2}-\alpha\left(  \Delta f\right)  _{t}-\varepsilon\left\Vert
\nabla^{2}f\right\Vert ^{2}-\frac{C_{3}}{\varepsilon}%
\end{array}
\right) \\
&  \geq\ t\left(  \gamma\left\Vert \nabla^{2}f\right\Vert ^{2}+2\lambda
\left\langle \nabla f,\nabla\Delta f\right\rangle -\alpha\left(  \Delta
f\right)  _{t}-C_{4}|\nabla f|^{2}-C_{3}\right)  .
\end{align*}
} In the second line we applied the Bochner formula, with $K$ a uniform lower
bound on the time-dependent Ricci curvature, \ $C_{1}$ is an upper bound on
$\left\Vert \nabla\lambda\right\Vert ,$ $\varepsilon$ is somewhat smaller than
a lower bound on $\lambda,$ \ $C_{2}$ bounds $\triangle\lambda$ and the
constant $C_{3\text{ }}$ bounds the time derivative of the metric. \ \ The
constant $\gamma$ is a positive lower bound for $\lambda$. \ 

Now
\begin{equation}
\Delta f=-|\nabla f|^{2}+\frac{1}{\lambda}f_{t}=-\frac{1}{\lambda}\left(
\frac{1}{t}F-(1-\alpha)f_{t}\right)  . \label{LY1}%
\end{equation}
We thus further estimate%
\begin{align*}
\Delta F\geq &  \ \ \ t\left(
\begin{array}
[c]{c}%
\gamma\left\Vert \nabla^{2}f\right\Vert ^{2}-2\lambda\left\langle \nabla
f,\nabla\frac{1}{\lambda}\left(  \frac{1}{t}F-(1-\alpha)f_{t}\right)
\right\rangle +\\
\alpha\left[  \frac{1}{\lambda}\left(  \frac{1}{t}F-(1-\alpha)f_{t}\right)
\right]  _{t}-C_{4}|\nabla f|^{2}-C_{3}%
\end{array}
\right) \\
&  \geq\ \ t\gamma\left\Vert \nabla^{2}f\right\Vert ^{2}-2\left\langle \nabla
f,\nabla F\rangle+2t(1-\alpha)\langle\nabla f,\nabla f_{t}\right\rangle
-C_{5}t\left\vert \nabla f\right\vert \left\vert (\lambda\triangle
f)\right\vert \\
&  +\frac{1}{\lambda}\left(  \alpha F_{t}-\alpha\frac{F}{t}-t\alpha
(1-\alpha)f_{tt}\right)  -C_{6}t\left\vert (\lambda\triangle f)\right\vert
-C_{4}t|\nabla f|^{2}-C_{3}t
\end{align*}%
\begin{align*}
&  \geq\ \ t\gamma_{2}\left\Vert \nabla^{2}f\right\Vert ^{2}-2\langle\nabla
f,\nabla F\rangle+\frac{1}{\lambda}\left\{  2t\lambda(1-\alpha)\langle\nabla
f,\nabla f_{t}\rangle-t\alpha(1-\alpha)f_{tt}\right\} \\
&  +\frac{1}{\lambda}\left(  \alpha F_{t}-\alpha\frac{F}{t}\right)
-C_{7}t|\nabla f|^{2}-C_{3}t\\
&  =\ t\gamma_{2}\left\Vert \nabla^{2}f\right\Vert ^{2}-2\langle\nabla
f,\nabla F\rangle+\frac{1}{\lambda}\left\{  (1-\alpha)F_{t}-(1-\alpha)\frac
{F}{t}-\lambda tC_{3}|\nabla f|^{2}\right\} \\
&  +\frac{1}{\lambda}\left(  \alpha F_{t}-\alpha\frac{F}{t}\right)
-C_{7}t|\nabla f|^{2}-C_{3}t\\
&  \geq\ t\gamma_{2}\left\Vert \nabla^{2}f\right\Vert ^{2}-2\langle\nabla
f,\nabla F\rangle+\frac{1}{\lambda}F_{t}-\frac{1}{\lambda}\frac{F}{t}%
-tC_{9}|\nabla f|^{2}-C_{3}t\\
&  \geq\ t\frac{\gamma_{2}}{\lambda n}\left(  -\lambda|\nabla f|^{2}+f_{t}%
^{{}}\right)  ^{2}-2\langle\nabla f,\nabla F\rangle+\frac{1}{\lambda}%
F_{t}-\frac{1}{\lambda}\frac{F}{t}-tC_{9}|\nabla f|^{2}-C_{3}t.
\end{align*}
\newline Here $C_{5}$ bounds $\left\vert \nabla\frac{1}{\lambda}\right\vert $,
$\ $\ and $C_{6}$ bounds $\left\vert \frac{1}{\lambda}\right\vert _{t},$
\ with $\gamma_{2}$ an even smaller constant, related to $C_{7}.$ \ In the
last line we applied $\left\Vert \nabla^{2}f\right\Vert ^{2}\geq\frac{1}%
{n}(\Delta f)^{2}$ and the relation (\ref{LY1}).

Now at a maximum for $F$ which happens at some positive time on $M\times
\lbrack0,T]$ we can conclude%

\begin{align}
0\geq &  \ t\frac{\gamma_{2}}{\lambda n}\left(  -\lambda|\nabla f|^{2}%
+f\right)  ^{2}-2\langle\nabla f,\nabla F\rangle+\frac{1}{\lambda}F_{t}%
-\frac{1}{\lambda}\frac{F}{t}-tC_{9}|\nabla f|^{2}-C_{3}t\nonumber\\
\geq &  \ \ \ t\gamma_{3}\left(  -\lambda|\nabla f|^{2}+f_{t}\right)
^{2}-\frac{1}{\lambda}\frac{F}{t}-tC_{9}|\nabla f|^{2}-C_{3}t. \label{LY18}%
\end{align}
Now let
\begin{align*}
y  &  =\lambda|\nabla f|^{2},\\
z  &  =f_{t}.
\end{align*}
Continuing to follow \cite[Eq 1.9]{LiYau} we expand
\begin{equation}
(y-z)^{2}=\left(  \frac{1}{\alpha}-\frac{\varepsilon}{2}\right)  (y-\alpha
z)^{2}+(1-\varepsilon-\delta-\frac{1}{\alpha}+\frac{\varepsilon}{2}%
)y^{2}+(1-\alpha+\frac{\varepsilon}{2}\alpha^{2})z^{2}+\varepsilon y(y-\alpha
z)+\delta y^{2} \label{LY19}%
\end{equation}
Choose
\begin{align*}
\varepsilon &  =2-2\frac{1}{\alpha}-2\frac{1}{(1-\alpha)^{2}}>0\\
\delta &  =\frac{1}{(1-\alpha)^{2}}%
\end{align*}
and $\alpha$ so that
\begin{align*}
\frac{1}{\alpha}  &  >\varepsilon>0\\
\delta &  >\frac{\varepsilon}{2}>0.
\end{align*}
Specifically, one may choose $\alpha=\frac{5}{2}$ to satisfy these conditions.
Note that the second and third terms in (\ref{LY19}) vanish, and multiplying
(\ref{LY18}) by $t$, absorbing bounds on $\lambda$ and combining with
(\ref{LY19})%
\[
0\geq\ \ \ t^{2}\gamma_{4}\left\{  \left(  \frac{1}{\alpha}-\frac{\varepsilon
}{2}\right)  (y-\alpha z)^{2}+\varepsilon y(y-\alpha z)+\delta y^{2}\right\}
-F-t^{2}C_{10}y-C_{11}t^{2}%
\]%
\[
\geq\ \ \ \gamma_{4}\left\{  \left(  \frac{1}{\alpha}-\frac{\varepsilon}%
{2}\right)  F^{2}-\frac{\varepsilon}{2}F^{2}-\frac{\varepsilon}{2}t^{2}%
y^{2}+\delta t^{2}y^{2}\right\}  -F-t^{2}C_{10}y-C_{11}t^{2}%
\]

\[
\geq\ \ \ \ \gamma_{4}\left(  \frac{1}{\alpha}-\varepsilon\right)
F^{2}+\ \gamma_{4}\left(  \delta-\frac{\varepsilon}{2}\right)  t^{2}%
y^{2}-F-t^{2}\left[  \frac{\left(  C_{2}+K\right)  }{\ \gamma_{4}\left(
\delta-\frac{\varepsilon}{2}\right)  }+\ \gamma_{4}\left(  \delta
-\frac{\varepsilon}{2}\right)  y^{2}\right]  -C_{11}t^{2}.
\]
From which we conclude that
\[
F\leq\frac{1}{2\gamma_{4}\left(  \frac{1}{\alpha}-\varepsilon\right)  }%
+\frac{\sqrt{1+\left(  4\frac{C_{10}}{\gamma_{4}\left(  \delta-\frac
{\varepsilon}{2}\right)  }\ +C_{11}\right)  \gamma_{4}\left(  \frac{1}{\alpha
}-\varepsilon\right)  }t^{2}}{\gamma_{4}\left(  \frac{1}{\alpha}%
-\varepsilon\right)  }%
\]
i.e
\begin{equation}
\lambda\left\vert \nabla f\right\vert ^{2}-\alpha f_{t}\leq\frac{C}{t}+C.
\label{harnackloc10}%
\end{equation}
Now we have arrived at this conclusion assuming that the maximum happens away
from $t=0.$ \ Letting $f=\log(U+\iota)$ this assumption is available. \ We
then take $\iota\rightarrow0$. \ 

Now consider now a path $\gamma:[0,1]\rightarrow M\times\lbrack t_{1},t_{2}]$
such that $\gamma(0)=(y,t_{2})$ and $\gamma(1)=(x,t_{1})$, which for
convenience, projects to a geodesic in $M_{t_{1}}$ and has constant speed in
$t.$ \ Using the\ assumption (\ref{metrics are equiv}) and (\ref{harnackloc10}%
)
\begin{align*}
f(x,t_{1})-f(y,t_{2})\leq &  \ \int_{0}^{1}\left\{  |\nabla f|C_{0}%
d(x,y)-(t_{2}-t_{1})(f)_{t}\right\}  ds\\
\leq &  \ \int_{0}^{1}\left\{  |\nabla f|C_{0}d(x,y)+(t_{2}-t_{1})\left(
C+\frac{C}{t}-\frac{\lambda|\nabla f|^{2}}{\alpha}\right)  \right\}  ds\\
\leq &  \ \int_{0}^{1}\left\{  \frac{\alpha}{4(t_{2}-t_{1})\lambda}C_{0}%
^{2}d^{2}(x,y)+(t_{2}-t_{1})\left(  C+\frac{C}{t}\right)  \right\}  ds\\
\leq &  \ \frac{\alpha}{4(t_{2}-t_{1})\min\lambda}C_{0}^{2}d^{2}%
(x,y)+C(t_{2}-t_{1})+C\log\frac{t_{2}}{t_{1}}%
\end{align*}
where in the last line we integrate using that $t=(1-s)t_{2}+st_{1}$. {This
completes the proof of Harnack inequality (Theorem~\ref{harnack}).}

\section{ Proof of Theorem~\ref{main}: Exponential convergence to the solution
to optimal transport problem}

\label{S:exponential} {This section completes the proof of Theorem~\ref{main}.
We assume the conditions in Theorem~\ref{main}. In the first subsection we
show the exponential convergence to a stationary solution, and then in the
last subsection we show that the stationary solution is indeed the solution to
the optimal transport problem.}

\subsection{Exponential convergence}

\subsubsection{Case $n\geq3$}

With this parabolic Harnack inequality at hand, the claimed exponential
convergence to the optimal transportation map follows from a rather standard
argument. To see this, consider for $k\in\mathbf{N}$,
\begin{align*}
U_{k}=  &  \ \sup_{x\in M}\theta(x,k-1)-\theta(x,(k-1)+t),\\
L_{k}=  &  \theta(x,k-1+t)-\inf_{x\in M}\theta(x,k-1)
\end{align*}
that are also solutions to the equation $U_{t}=LU$. By the strong maximum
principle, both $U$ and $L$ are positive functions for positive $t,$ for all
$k$. Also, let $H(t)=\sup_{x\in M}\theta(x,t)-\inf_{x\in M}\theta(x,t)$. The
Harnack inequality in {Theorem~\ref{harnack} yields }
\begin{align*}
\sup\theta(x,k-1)-\inf\theta(x,k-\frac{1}{2})  &  \leq C\left(  \sup
\theta(x,k-1)-\sup\theta(x,k)\right)  ,\\
\sup\theta(x,k-\frac{1}{2})-\inf\theta(x,k-1)  &  \leq C\left(  \inf
\theta(x,k)-\inf\theta(x,k-1)\right)
\end{align*}
for some fixed constant $C>1$. It follows by adding the equations together
that
\[
H(k-1)+H(k-\frac{1}{2})\leq C\left(  H(k-1)-H(k)\right)
\]
which implies
\[
H(k)\leq\epsilon H(k-1)
\]
where $\epsilon=\frac{C-1}{C}<1$. By induction we observe
\[
H(k)\leq\epsilon^{k}H(0).
\]
It follows that $H(t)\leq Ce^{-\beta t}$ where $\epsilon=e^{-\beta}$.
Therefore, $\theta$ converges to the limit $\theta_{\infty}\equiv const.$
exponentially fast as $t\rightarrow\infty$. Now the quantity $\theta$ can be
larger than $0$ somewhere, only if it is smaller than $0$ somewhere, as can be
seen by integrating the change of measures and using $\int_{M}\rho
(x)dx=\int_{\bar{M}}\bar{\rho}(\bar{x})d\bar{x}$. It follows that
$\theta_{\infty}\equiv0$ and thus from $u_{t}=\theta$, we see $u_{t}%
\rightarrow0$ exponentially fast as $t\rightarrow\infty$. This implies $u$
converges exponentially fast to a stationary solution $u_{\infty}$, which is
smooth from the uniform $C^{m}$, $m\in\mathbf{N}$, estimates. Because
$\theta_{\infty}\equiv0$, $u_{\infty}$ solves the elliptic equation
\eqref{eq:OT}. \ Considering the discussion in Section \ref{SS:loc2glob}, this
solution is a solution to the optimal transportation problem. This finishes
the proof of the claimed exponential convergence and of Theorem~\ref{main} for
$n\geq3.$

\subsubsection{Case $n=2$}

On $M^{2}$ $\times\bar{M}^{2}$ a solution $u$ will satisfy all of the
estimates, hence exists and enjoys subsequential convergence at infinity.
\ The only missing piece is the Harnack inequality. \ However, we can fake a
third dimension and get a solution on $\ M^{2}\times S^{1}$ $\rightarrow
\bar{M}^{2}\times S^{1}$ by letting $\tilde{u}(x,z)=u(x)$ and taking products
of the measures with uniform measures on $S^{1}.$ \ Hence $\tilde{u}(x,z)$
will also be a solution, and will converge in the same way to the three
dimensional product solution. \ 

\subsection{The limit stationary solution is the solution to the optimal
transport problem: strict global $c$-convexity:}

\label{SS:loc2glob} To conclude that the limit stationary solution, say
$u_{\infty}$, is a solution to the optimal transport problem (and not a
spurious solution to the elliptic equation), it remains to show that
$u_{\infty}$ is globally strict $c$-convex, which from the discussion in the
middle of Section~\ref{SS:stay away} follows if the corresponding map, say
$T_{\infty}$, is a global diffeomorphism. (This map $T_{\infty}$ is already a
local diffeomorphism by local strict $c$-convexity of $u_{\infty}$.) To see
this, we use that $u_{\infty}$ satisfies the Monge-Amp\`{e}re type equation
\eqref{eq:OT}. If $T_{\infty}$ is not one-to-one, as a local diffeomorphism
between closed manifolds, it is a covering map, having the topological degree
greater than $1$. Thus, from \eqref{eq:OT} the push-forward $T_{\infty\#}\rho$
is a multiple of the target measure $\bar{\rho}$. But, this contradicts the
fact that $\int_{M}\rho=\int_{\bar{M}}\bar{\rho}$ since $\int_{\bar{M}%
}T_{\infty\#}\rho=\int_{M}\rho$. This finishes the proof of the fact that the
limiting stationary solution of \eqref{eq:main} is the solution to the optimal
transport problem, and thus together with all the previous sections
(especially Section~\ref{S:long time}) it completes the proof of
Theorem~\ref{main}.

As a final remark, we state a corollary to Theorem~\ref{main}, in particular
this last paragraph:

\begin{corollary}
\label{C:local to global c-convex} Assume that the same conditions as in
Theorem~\ref{main} hold. Then, any locally strictly $c$-convex $C^{2}$
function $u_{0}$ is in fact globally $c$-convex.
\end{corollary}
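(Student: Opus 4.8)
The plan is to run the parabolic flow \eqref{eq:main} starting from the given initial condition $u_0$ and use Theorem~\ref{main} to extract information about $u_0$ itself. Concretely, let $u(\cdot,t)$ be the unique smooth solution to \eqref{eq:main} with $u(\cdot,0)\equiv u_0$, which exists for all time and converges exponentially to a stationary solution $u_\infty$ that (by the last paragraph of Section~\ref{SS:loc2glob}) is globally $c$-convex and solves the optimal transport problem. So I have a globally $c$-convex function at the $t=\infty$ end of the flow, and I want to transport that property back to $t=0$.

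The key observation is that global $c$-convexity of $u(\cdot,t)$ is preserved \emph{backwards} in $t$ once we know it holds at some time. Actually, the cleaner route: I would show that the set of times $t\in[0,\infty)$ for which $u(\cdot,t)$ is globally $c$-convex is both open and closed, hence all of $[0,\infty)$ (it is nonempty since it contains a neighborhood of $t=\infty$ by the exponential convergence together with the fact that global $c$-convexity, being characterized by $T(\cdot,t)$ being a global diffeomorphism — equivalently injective, since it is already a local diffeomorphism by local strict $c$-convexity, which holds along the entire flow by Corollary~\ref{C:strict c-convex} — is an open condition in $C^1$, and $u(\cdot,t)\to u_\infty$ in every $C^m$). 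Closedness is immediate from the same $C^1$-continuity of $t\mapsto T(\cdot,t)$ and the fact that a $C^1$-limit of injective local diffeomorphisms between closed manifolds of the same dimension is still injective — here one uses the degree argument already appearing in Section~\ref{SS:loc2glob}: a local diffeomorphism between closed manifolds is a covering map, its degree $d$ is a locally constant integer-valued function of $t$, and $d=1$ at times near infinity, so $d\equiv 1$, forcing injectivity for all $t$; in particular at $t=0$, giving that $T_0$ is a global diffeomorphism and hence, by the discussion in Section~\ref{SS:stay away}, that $u_0$ is globally $c$-convex.

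In fact the degree argument alone suffices and is the slickest presentation: along the whole flow $T(\cdot,t)$ is a local diffeomorphism between the closed manifolds $M$ and $\bar M$ (by Corollary~\ref{C:strict c-convex}), hence a covering map of some degree $d(t)\in\mathbf{N}$; since $t\mapsto T(\cdot,t)$ is continuous in $C^1$ and the degree is locally constant, $d$ is constant on $[0,\infty)$; as $t\to\infty$, $T(\cdot,t)\to T_\infty$ which is a global diffeomorphism (degree $1$) by Section~\ref{SS:loc2glob}, so $d\equiv 1$; therefore $T_0=T(\cdot,0)$ is a global diffeomorphism, and by the equivalence established in the middle of Section~\ref{SS:stay away} (for $C^2$ locally strictly $c$-convex functions on a closed manifold with $c$ locally semi-concave on its finiteness domain), $u_0$ is globally $c$-convex.

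The main technical point to be careful about is the passage $t\to\infty$: one must know not merely that $u(\cdot,t)$ converges but that $T(\cdot,t)$ converges in $C^1$ to $T_\infty$, which follows since $T$ is determined by $Du$ through \eqref{eq:potential} and $u(\cdot,t)\to u_\infty$ in $C^2$ (indeed in every $C^m$) by the uniform bounds of Section~\ref{S:long time} plus the exponential decay of $u_t=\theta$; and that $T_\infty$ really is a \emph{global} diffeomorphism rather than merely a covering — this is exactly what was proved in Section~\ref{SS:loc2glob} from the equation \eqref{eq:OT} and $\int_M\rho=\int_{\bar M}\bar\rho$. Everything else is the elementary topology of covering maps and the continuity/homotopy-invariance of the degree, so no genuinely new estimate is needed; the corollary is essentially a repackaging of the proof of Theorem~\ref{main}.
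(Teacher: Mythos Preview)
Your proposal is correct and, once you settle on the degree argument in your third paragraph, it is essentially identical to the paper's own proof: run the flow, observe that $T(\cdot,t)$ is a local diffeomorphism for all $t$ by Corollary~\ref{C:strict c-convex}, use continuity of $t\mapsto T(\cdot,t)$ to conclude the degree is constant, and pin it to $1$ via the fact (from Section~\ref{SS:loc2glob}) that $T_\infty$ is a global diffeomorphism. The open/closed detour in your second paragraph is unnecessary and slightly awkward (since $t=\infty\notin[0,\infty)$), so just present the degree argument directly as you do in the third paragraph.
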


\begin{proof}
Let $T_{0}$ denote the corresponding map to $u_{0}$ by the formula
\eqref{eq:potential}. Local strict $c$-convexity implies that $T_{0}$ is local
diffeomorphism and for global $c$-convexity of $u_{0}$, it is enough to show
that $T_{0}$ has topological degree $1$. From Theorem~\ref{main}, the map $T$
depends continuously on the time variable $t$. In particular, the degree stays
constant. From the result of Theorem~\ref{main} the map $T_{\infty}$ in the
lim $t\to\infty$ is a diffeomorphism and so its degree is $1$. This shows that
the degree of $T_{0}$ is also $1$, and completes the proof that $u_{0}$ is
globally $c$-convex.
\end{proof}

\end{document}